\newtheorem{theorem}{Theorem}
\newtheorem{example}[theorem]{Example}
\newtheorem*{claim}{Claim}
\newtheorem{lemma}[theorem]{Lemma}
\newtheorem{prop}[theorem]{Proposition}
\newtheorem{corollary}[theorem]{Corollary}
\newtheorem{definition}[theorem]{Definition}
\newtheorem*{main}{Main Theorem}
\newtheorem*{maincor}{Main Corollary}
\pgfplotsset{compat=1.15}
\newcommand{\cupdot}{\mathbin{\mathaccent\cdot\cup}}
\begin{document}
	
	\definecolor{qqqqff}{rgb}{0.,0.,1.} %blu
	\definecolor{bfffqq}{rgb}{0.7490196078431373,1.,0.}
	\definecolor{qqwwtt}{rgb}{0.,0.4,0.2}
	\definecolor{zzccqq}{rgb}{0.6,0.8,0.}
	\definecolor{qqffqq}{rgb}{0.,1.,0.}
	\definecolor{ffqqqq}{rgb}{1.,0.,0.}
	\definecolor{xfqqff}{rgb}{0.4980392156862745,0.,1.}
	\definecolor{ffqqff}{rgb}{1.,0.,1.}
	\definecolor{cqcqcq}{rgb}{0.7529411764705882,0.7529411764705882,0.7529411764705882}
	\definecolor{ffffff}{rgb}{1.,1.,1.}

\title{Critical groups and partitions of finite groups}
 \author{\textbf{Daniela Bubboloni} \\
%EndAName
{\small {Dipartimento di Matematica e Informatica ``Ulisse Dini''} }\\
\vspace{-6mm}\\
{\small {Universit\`{a} degli Studi di Firenze} }\\
\vspace{-6mm}\\
{\small {viale Morgagni, 67/a, 50134, Firenze, Italy}}\\
\vspace{-6mm}\\
{\small {e-mail: daniela.bubboloni@unifi.it}}\\
\vspace{-6mm}\\
{\small tel: +39 055 2759667}\\ \and \textbf{Nicolas Pinzauti}
 \\
%EndAName
%{\small {Dipartimento di Matematica e Informatica ``Ulisse Dini''} }\\
%\vspace{-6mm}\\
%{\small {Universit\`{a} degli Studi di Firenze} }\\
%\vspace{-6mm}\\
%{\small {viale Morgagni, 67/a, 50134, Firenze, Italy}}\\
\vspace{-6mm}\\
{\small {e-mail: nicolas.pinzauti@edu.unifi.it}}\\
\vspace{-6mm}\\
{\small tel: +39 334 5890261}}

\maketitle

\begin{abstract}
We define a class of finite groups based on the properties of the closed twins of their power graphs and study the structure of those groups. As a byproduct, we obtain results about finite groups admitting a partition by cyclic subgroups.
\end{abstract}

\vspace{4mm}

\noindent \textbf{Keywords: } Power graphs of finite groups,  partitions, Frobenius groups, EPPO groups.

\vspace{2mm}

\noindent \textbf{MSC classification:} 05C25;  20D99; 06A15.

\section{Introduction} 

There is a growing literature about graphs associated with groups, as well described by the survey \cite{GraphsOnGroups}. In many cases this results into defining a graph $\Gamma(G)$, having vertex set a group $G$ or a suitable subset of it, and edges or arcs reflecting in some way the nature of the group. Many contributions explore the graph theoretical properties of $\Gamma(G)$, especially for particular classes of groups. See, for instance, \cite{Bubboloni_3}, \cite{shi2}, \cite{Conn}, \cite{Das}, \cite{luch2}. Other contributions study instead how a peculiar structure for $\Gamma(G)$ reflects into the group structure (\cite{Bra},\cite{luch1}, \cite{ForbiddenGraphs}, \cite{Ma2}, \cite{Bri}). Our paper is somewhat  in line with this second approach. However, we think that it is possible to be more ambitious and derive general theoretical results for finite groups exploiting the properties of a certain $\Gamma(G)$ as a main tool. Such idea is developed here with respect to the power graph of a group $G$ (for a recent survey on power graphs see \cite{Kumar}), having in mind the concepts of critical classes and neighbourhood closure introduced in \cite{BP22} in order to emend a mistake in \cite{Cameron_2}.

Critical classes were created as a conceptual tool for the reconstruction of the directed power graph $\vec{\mathcal{P}}(G)$ of a finite group $G$ from its undirected counterpart, the power graph $\mathcal{P}(G)$. They represent the cornerstones of the manageable algorithm for that reconstruction (see \cite{BParxiv}) created in order to answer to \cite[Question 2]{GraphsOnGroups}. Recall that, $\vec{\mathcal{P}}(G)$ has vertex set $G$ and an arc from $x$ to $y$, distinct elements of $G$,
 if $\langle y\rangle \leq \langle x\rangle$. To have the same closed neighbourhood in $\mathcal{P}(G)$ defines an interesting equivalence relation $\mathtt{N}$ in $G$. To generate the same cyclic subgroup defines a further equivalence relation $\diamond$ in $G$ and an $\mathtt{N}$-class is union of $\diamond$-classes. An $\mathtt{N}$-class is called plain if it is a unique $\diamond$-class, otherwise, it is called compound.
A  critical class is  an $\mathtt{N}$-class of special type (see Definition \ref{DefCriticalClass}), difficult to be recognized as plain or compound  through arithmetical considerations on its size and on the size of its so-called neighbourhood closure. This difficulty impacts on the algorithm for the reconstruction of the directed power graph, making it much more articulated and complex.
One can reasonably asks how significant is the presence of such classes in a group. Are they rare or common? Is it possible to understand in which kind of groups they appear and how the structure of $G$ is influenced by them? The present paper approaches those questions starting by the extreme case of groups in which every $\mathtt{N}$-class, apart the class of the neutral element, is critical.
Surprisingly, such groups, which we call {\it critical}, do exist and it is possible to give a complete description of them.
\begin{main}
A group $G$ is critical if and only if $G=\langle x\rangle\rtimes \langle y\rangle$, where $\langle x\rangle=C_{p^a}$, $\langle y\rangle=C_{q^b}$, with $p\neq q$ primes, $a,b\geq 2$, $x^y=x^r,$ with $2\leq r<p^a$ such that $p\nmid r$ and $q^b=|r|_{\mod p}$.
\end{main}
As a consequence of the above theorem, critical groups are a subclass of the so-called EPPO groups, that is, those groups
whose  elements have prime power order.  The class of EPPO groups has a well consolidated history, starting with  Highman \cite{hig} who described the solvable case. Later Suzuki \cite{suz} described the simple case and the classification was finally completed by Brandl \cite{brandl}.
Actually,  the critical groups can be described as the Frobenius non-cyclic metacyclic  {\rm EPPO} groups with $|\pi(G)|\geq 2$ and for which the exponent of a $p$-Sylow subgroup is at least $p^2$,  for all $p\in \pi(G)$ (see Proposition \ref{crit-eppo}).

Since Frobenius groups are a particular class of groups admitting a non-trivial partition, it is also somewhat natural to expect an interaction of our research with partition theory. Indeed, we prove a result about partitions which, interestingly, does not mention criticality or graphs in its statement. It is just a pure group theoretical result.
That indicates that concepts and ideas originally developed to solve the specific question of the reconstruction of the directed power graph  from  the power graph, go very far beyond the original scope. Moreover, this confirms that the ambitious project about the possibility to deduce results of general group theory by the knowledge coming from graphs associate with groups, is not just a dream.
\begin{maincor} Let $G$ be a group 
	admitting a non-trivial partition into cyclic subgroups of orders  a prime power with exponent at least $2$. Then $\pi(G)=\{p,q\}$ with $p,q$ distinct primes and $G$ is a Frobenius group with kernel $F\cong C_{p^a}$ and complement $H\cong C_{q^b}$, for suitable $a,b\geq2$ and $p$ odd.
\end{maincor} 

On the road to prove the Main Corollary, we obtain other results about groups admitting a partition by cyclic subgroups. First, we show that if a group $G$ admits a partition by cyclic subgroups, then every  element of $G$ whose $\mathtt{N}$-class is critical plain generates a maximal cyclic group (Proposition \ref{noPartition}). Then we prove that a group $G$ admitting a non-trivial partition by cyclic subgroups of orders  a prime power with exponent at least $2$ is necessarily critical (Proposition \ref{applicazione_critical}).

The research on partitions is recently quite active (\cite{far}, \cite{gar}, \cite{fog}). Our paper, among other things, aims to give a contribute to this evergreen topic.  

\section{Notation and preliminary results}

We denote by $\mathbb{N}$ the set of positive integers and we set $\mathbb{N}_0:=\mathbb{N}\cup\{0\}$. 
For $k\in \mathbb{Z}$ we set $[k]:=\{x\in \mathbb{N}: x\leq k\}$ and $[k]_0:=\{x\in \mathbb{N}_0: x\leq k\}.$ For $k\in \mathbb{N}$, we denote by $S_k$ the symmetric group on $[k].$
 We say that $n\in \mathbb{N}$ is a prime power [a proper prime power] if there exists a prime number $p$ and $k\in \mathbb{N}_0$ [$k\in  \mathbb{N}$] such that $n=p^k$. So, $1$ is considered a prime power. $\phi$ denotes the Euler's totient function.
%Let $X$ be a finite set. We denote by $2^X$ its power set and by $S_{X}$ the symmetric group on $X$. When $X=[k],$ we simplify the notation to $S_k.$ % If $\mathcal{P}=\{X_1,\dots,X_r\}$, with $r\in \mathbb{N},$ is a partition of $X$ and $\psi\in S_{X}$, then we denote by $\psi(\mathcal{P})$ the partition of $X$ given by $\{\psi(X_1),\dots, \psi(X_r)\}$.
 Given $A,B\subseteq X$, we write $X=A\cupdot B$ if $X=A\cup B$ and $A\cap B=\varnothing.$
%We use the notation $\Gamma= (V, E)$ for graphs and $\vec{\Gamma}=(V, A)$ for digraphs. 
All the groups considered in this paper are finite.
Given a group $G$, we say that $Y\leq G$ is \emph{cyclic maximal} if there exists no cyclic subgroup $Z$ of $G$ such that $Z>Y.$
An element $x$ of $G$ is called \emph{maximal} if $\langle x \rangle$ is cyclic maximal.
\subsection{Partitions of groups}
Let $G$ be a group with $G\neq 1$. A partition of $G$ is a set $\mathcal{P}$ of non-trivial subgroups of $G$, such that every element $x\in G\setminus\{1\}$ belongs to a unique subgroup in $\mathcal{P}.$ The subgroups in $\mathcal{P}$ are called the components of the partition. 
 If $|\mathcal{P}|=1,$ the partition is called the trivial partition and its only component is $G.$
For instance, a non-trivial cyclic group admits only the trivial partition.  
Given $U_i\leq G$, for $i\in [k]$ and $k\in \mathbb{N},$
 it is immediately observed that  $\mathcal{P}=\{U_i: i\in [k]\}$ is a non-trivial partition of $G$ if and only if $k\geq 2$, $1<U_i<G$ for all $i\in [k]$, $G=\bigcup_{i=1}^kU_i$ and $U_i\cap U_j=\{1\}$ for all $i\neq j\in [k]$.

In this paper, we are particularly interested in partitions with cyclic components and partitions of $p$-groups. It is easily checked that if $\mathcal{P}$ is a partition  with cyclic components, then those components  are  cyclic maximal.
In the theory of partitions of $p$-groups a main role is played by the Hughes-Thompson subgroup.
Recall that, for a $p$-group $G$, such subgroup is defined by $$H_p(G)\coloneq\langle x\in G: o(x)\neq  p\rangle.$$
%That subgroup plays a main role in the theory of partitions for $p$-groups. 
\begin{theorem}[Kegel] \label{Kegel} A $p$-group $G$ admits a non-trivial partition if and only if $H_p(G)\neq G.$
\end{theorem}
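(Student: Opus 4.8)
The plan is to prove the two implications of the biconditional separately, using only that the components of a partition are subgroups meeting pairwise in $\{1\}$. Throughout I regard the degenerate case $G\cong C_p$ as excluded (it admits no non-trivial partition, while $H_p(C_p)=1$), so I may assume $|G|\ge p^2$; equivalently, I note at the outset that the existence of a non-trivial partition already forces $G\not\cong C_p$.

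For the implication ``$H_p(G)\neq G\Rightarrow$ a non-trivial partition exists'', set $H:=H_p(G)$ and I would exhibit an explicit partition. First I observe that, since every element of order $\neq p$ lies in $H$ by definition, if $H\neq G$ there must exist an element of order $p$ outside $H$ (otherwise $H$ would contain all order-$p$ elements as well, hence all of $G$). I then propose the family $\mathcal{P}=\{H\}\cup\{\langle x\rangle : x\in G\setminus H,\ o(x)=p\}$, with the convention that $H$ is omitted when $H=1$ (the case $\exp(G)=p$). Every non-identity element of order $\neq p$ lies in $H$, every element of order $p$ lies either in $H$ or in the corresponding $\langle x\rangle$, and two distinct components meet trivially: a subgroup of the prime-order group $\langle x\rangle$ is $1$ or $\langle x\rangle$, and $x\notin H$ rules out the non-trivial option, while $\langle x\rangle\cap\langle x'\rangle\neq 1$ forces $\langle x\rangle=\langle x'\rangle$. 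The existence of an order-$p$ element outside $H$ guarantees at least two components, so the partition is non-trivial.

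For the converse ``a non-trivial partition exists $\Rightarrow H_p(G)\neq G$'' I would argue by contradiction, assuming a non-trivial partition $\mathcal{P}$ exists while $H_p(G)=G$. Since each component is a subgroup, $x\in U$ implies $\langle x\rangle\le U$, so every element of order $\ge p^2$ lies, together with its powers, in a unique component. If all such elements lay in a single component $U$, then $G=H_p(G)\le U<G$, a contradiction; hence at least two components contain elements of order $\ge p^2$. The tool I would exploit is the following observation about a central element $z\in Z(G)$ of order $p$, lying in a component $U_1$: for any $g$ with $z\in\langle g\rangle$ one has $\langle g\rangle\le U_{(g)}$, whence $z\in U_{(g)}$ and therefore $U_{(g)}=U_1$. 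In other words, every $g$ whose cyclic subgroup contains the central line $\langle z\rangle$ is confined to $U_1$, and every $g\notin U_1$ satisfies $\langle g\rangle\cap\langle z\rangle=1$.

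The hard part will be to turn these local constraints into a global contradiction, and this is where I expect the genuine difficulty of the theorem to sit. My plan is to induct on $|G|$ via the central quotient $\bar{G}=G/\langle z\rangle$: I would show that $\mathcal{P}$ descends to a non-trivial partition of $\bar{G}$ (using the confinement observation to control how components collapse modulo $\langle z\rangle$), apply the inductive hypothesis to obtain $H_p(\bar{G})\neq\bar{G}$, and finally contradict this by lifting. The delicate point --- essentially the reason the statement is a named theorem rather than a routine exercise --- is the behaviour of the Hughes subgroup under the central quotient: an element of order $p^2$ in $G$ may become an element of order $p$ in $\bar{G}$, so $H_p(G)=G$ does not transparently yield $H_p(\bar{G})=\bar{G}$. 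Controlling this passage, by a careful analysis of which order-$p$ elements of $\bar{G}$ lift to elements of order $\ge p^2$ in $G$ and how they interact with the confined component $U_1$, is the step I would need to execute with the most care.
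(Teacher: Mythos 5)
The paper states Kegel's theorem as a quoted classical result and gives no proof of its own, so your proposal can only be judged on its merits. Your forward implication ($H_p(G)\neq G$ implies a non-trivial partition) is correct and is the standard argument: $\{H_p(G)\}$ together with the subgroups $\langle x\rangle$ for $x$ of order $p$ outside $H_p(G)$ is easily checked to be a partition, and you are also right to flag the degenerate case $G\cong C_p$, for which the statement as literally written fails.

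The converse, however, is where the content of the theorem lies, and your proposal leaves it genuinely open: the induction through $\bar G=G/\langle z\rangle$ that you sketch does not work, because a non-trivial partition of $G$ need not descend to one of $\bar G$ (the images $\overline{U_i}$ of distinct components can intersect non-trivially, since $u_iu_j^{-1}\in\langle z\rangle$ does not force $u_i=u_j$), quite apart from the order-dropping problem you already identified. The missing idea is much more elementary and avoids any quotient. Take $z\in Z(G)$ of order $p$ and let $U_1$ be its component. If $g\in G$ has order $p^k$ with $k\geq 2$, then since $z$ is central, $(gz)^{p^{k-1}}=g^{p^{k-1}}z^{p^{k-1}}=g^{p^{k-1}}\neq 1$; hence $\langle g\rangle$ and $\langle gz\rangle$ share a non-trivial element and therefore lie in the same component $U$, which then also contains $z=g^{-1}(gz)$, so $U=U_1$. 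Thus \emph{every} element of order greater than $p$ lies in $U_1$, whence $H_p(G)\leq U_1<G$ because a non-trivial partition has at least two components, each a proper subgroup. Your ``confinement observation'' only controls those $g$ with $z\in\langle g\rangle$; the product trick $g\mapsto gz$ is what extends the confinement to all elements of order at least $p^2$ and closes the proof.
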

A sequel of results by Kegel, Baer and Suzuki  has led to a complete classification of the finite groups admitting a non-trivial partition.
A group $G$ admits a non-trivial partition if and only if $G$ belongs to the following list (see, for instance, \cite[Section $7$]{zappa}):% or \cite[Introduction]{far} or \cite[Introduction]{gar}): 
\begin{itemize}
%\item[(1)] $S_4$,
\item[(1)] a $p$-group with $H_p(G)\neq G$, 
\item[(2)] a group of Hughes-Thompson type,
\item[(3)] a Frobenius group,
\item[(4)] $\mathrm{PSL}_2(q)$,
\item[(5)] $\mathrm{PGL}_2(q)$ with $q$ odd,
\item[(6)] a Suzuki group $Sz(2^{2m+1})$, with $m\geq 1.$
\end{itemize}
 In particular,  $\mathrm{D}_{2n}$ with $n$ odd admits a partition, because it is a Frobenius group.
 \subsection{Power graphs}
  
Let $G$ be a group. The \emph{power graph} of $G$, denoted by $\mathcal{P}(G)$, has vertex set $G$ and edge set $E$, where for $x\neq y\in G$,  $\left\lbrace x,y \right\rbrace \in E$ if $\langle x\rangle\leq \langle y\rangle$ or $\langle y\rangle\leq \langle x\rangle.$
%$\left\lbrace x,y \right\rbrace \in E$ if $x\neq y$ and there exists a positive integer $m$ such that $x=y^m$ or $y=x^m$. 
A graph $\Gamma$ is said to be a power graph if there exists a group $G$ such that $\Gamma=\mathcal{P}(G)$. 
%Let $\Gamma=(V, E)$ be a graph with vertex set $V\neq \varnothing$ and edge set $E\subseteq\{e\subseteq V: |e|=2\}$. 

 For $x\in G$, the \emph{closed neighbourhood} of $x$ in $\Gamma=\mathcal{P}(G)$ is given by  $N[x]=\left\lbrace y \in G | \left\lbrace x, y \right\rbrace \in E\right\rbrace\cup\{x\} $. 
Note that, for every $x,y\in G$, we have $y\in N[x]$ if and only if $x\in N[y]$. If $N[x]=G$, then $x$ is called a \emph{star vertex} of $\Gamma$. The set of star vertices of $\Gamma$ is denoted by $\mathcal{S}$. 
There are two equivalence relations on $G$ on which we are concerned in this paper.
For $x,y\in G$, we write $x \mathtt{N}y$ if $N[x]=N[y]$.  The relation $\mathtt{N}$ is an equivalence relation called the \emph{closed twin relation}. Note that $x \mathtt{N}y$, with $x\neq y$ implies $\{x,y\}\in E$.
We denote the $\mathtt{N}$-class of $x\in G$ by $[x]_{\mathtt{N}}$.
In \cite[Section 3]{BP22}, given $X\subseteq G$, it is considered the common closed neighbourhood $N[X]$, given by the intersection of the closed neighbourhoods of the vertices in $X$, when $X\neq\varnothing$ and by $G$ when $X=\varnothing$. Moreover, it is introduced the neighbourhood closure  $\hat{X}\coloneq N[N[X]]$. Remarkably,  the neighbourhood closure defines a Moore closure on $G$ whose main properties  are collected in \cite[Proposition 2]{BP22}.	
 For $x,y\in G$, we write  $x\diamond y$ if $\langle x\rangle=\langle y\rangle.$
The relation $\diamond$ is an equivalence relation, refining $\mathtt{N}$. Every $\mathtt{N}$-class is therefore a union of $\diamond$-classes.

We  now recall some peculiar definitions and results from the literature about  power graphs on which our paper relies.
%Any graph notation and graph theoretical concept from now on refers to the power graph $\mathcal{P}(G)=(G,E)$ of the group $G$ under consideration.

\begin{definition}[\cite{Cameron_2}, \cite{BP22}]
		\label{NplainType}\rm Let $G$ be a group and $C$ be an $\mathtt{N}$-class. We say that $C$ is a \emph{ class of plain type} if 
		$C$ is a single  $\diamond$-class; $C$ is a \emph{class of compound type} if $C$ is the union of at least two $\diamond$-classes.
	\end{definition}
As an interesting example, by an immediate application of \cite[Proposition 4]{Cameron_2}, we can clear the nature of the class $\mathcal{S}=[1]_{\mathtt{N}}$ in the power graphs. %Any graph notation from now on refers to power graphs.
\begin{example}\label{S-comp} {\rm Let $G$ be a group. Then the following facts hold:
		\begin{enumerate}
			\item[$(i)$] If $|\mathcal{S}|=1$, then $\mathcal{S}$ is the $\mathtt{N}$-class of plain type $\{1\}$; 
			\item[$(ii)$] If $|\mathcal{S}|\geq 2$, then $\mathcal{S}$ is an $\mathtt{N}$-class of compound type. More precisely, one of the following holds:
			\begin{enumerate}
				\item[\rm (a)] $G\cong C_{p^n}$ for some prime number $p$, $n\in \mathbb{N}$, and  $\mathcal{S}=G$ is the union of $(n+1)$ $\diamond$-classes, each containing the elements of order $p^i$ in $G$ for $i\in[n]_0$; 
				\item[\rm (b)] $G$ is cyclic, not a $p$-group, and $\mathcal{S}$ is the union of two $\diamond$-classes given by $\{1\}$ and the set of generators of $G$;
				\item[\rm (c)] $G$ is a generalized quaternion $2$-group and $\mathcal{S}$  is the union of two $\diamond$-classes given by $\{1\}$ and $\{\tau\}$, where $\tau$ is 
				the unique involution of $G$.
			\end{enumerate} 
		\end{enumerate} 	
		In particular, $\mathcal{S}$ is of plain type if and only if $\mathcal{S}=\{1\}.$}
	\end{example}
The $\mathtt{N}$-classes in $G$ different from $\mathcal{S}=\{1\}$ will be called the non-trivial $\mathtt{N}$-classes.
	
We now summarize some results and definitions from \cite{Cameron_2} and \cite{BP22} about compound classes.	
\begin{prop}{\rm [\cite[Proposition 5]{Cameron_2}, \cite[Proposition 15]{BP22}]}
		\label{propC_y}
		Let $G$ be a group and $C$ be an $\mathtt{N}$-class of $G$, with $C\neq \mathcal{S}$. The following facts are equivalent:
		
		\begin{enumerate}
		\item[$(i)$] $C$ is a compound class;
			
		\item[$(ii)$] If $y$ is an element of maximum order in $C$, then $o(y)=p^r$ for some prime number $p$ and some integer $r \geq 2$. Moreover, there exists $s\in [r-2]_{0} $ such that $$C= \left\lbrace z \in \langle y\rangle \, |\, p^{s+1}\leq o(z)\leq p^r \right\rbrace. $$ 
		\end{enumerate} 	
		%In particular the number of $\diamond$-classes into which a compound $\mathtt{N}$-class $C$ splits is $r-s \geq 2$. The orders of the elements in those $\diamond$-classes are given by $p^{s+1},p^{s+2}, \dots, p^r$ and the sizes of those $\diamond$-classes are $\phi(p^{s+1}), \phi(p^{s+2}),\dots, \phi(p^r)$ respectively.
		If $C\neq \mathcal{S}$ is a compound class, the ordered list $(p,r,s)$ is uniquely determined by $C$ and called the parameters of $C$; $y$ as in $(ii)$ is called a root of $C;$ the size of $C$ is $p^r-p^s$;  if $x\in G$ is such that $[x]_\mathtt{N}=C$, then $o(x)$ is a proper prime power.
			\end{prop}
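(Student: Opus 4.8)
The implication $(ii)\Rightarrow(i)$ is essentially free: if $C$ is an $\mathtt{N}$-class of the stated shape, then since $s\le r-2$ it contains elements of the two distinct orders $p^{s+1}$ and $p^{r}$, hence at least two $\diamond$-classes, so $C$ is compound by Definition \ref{NplainType}. All the work therefore lies in $(i)\Rightarrow(ii)$, and the plan is to pin down $C$ inside a single cyclic group and then show that, along the subgroup chain of that group, membership in $C$ is governed by a monotone threshold.

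First I would prove that $C\subseteq\langle y\rangle$, where $y$ is any element of maximum order in $C$. Indeed, for $x\in C$ with $x\neq y$ we have $\{x,y\}\in E$, since elements of the same $\mathtt{N}$-class are adjacent; hence $\langle x\rangle\leq\langle y\rangle$ or $\langle y\rangle\leq\langle x\rangle$, and the second case forces $o(y)\le o(x)$ and so $\langle x\rangle=\langle y\rangle$ by maximality of $o(y)$. Either way $x\in\langle y\rangle$. Next I would write $N[x]$ as the union of the down-set $\langle x\rangle=\{z\in G:\langle z\rangle\le\langle x\rangle\}$ and the up-set $U(x):=\{z\in G:\langle x\rangle\le\langle z\rangle\}$, and similarly for $y$. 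Since $\langle x\rangle\le\langle y\rangle$ we have $\langle x\rangle\subseteq\langle y\rangle\subseteq N[y]$ and $U(y)\subseteq U(x)$, so $N[x]=N[y]$ reduces to the two inclusions (A) $\langle y\rangle\subseteq N[x]$ and (B) $U(x)\subseteq N[y]$.

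Condition (A) says that every $w\in\langle y\rangle$ is comparable, by cyclic-subgroup inclusion, to $x$; translating inclusion in the cyclic group $\langle y\rangle$ into divisibility, this means every divisor of $o(y)$ is comparable, in divisibility, to $o(x)$. A short argument in the divisor lattice of $o(y)$ shows that this can hold with $1<o(x)<o(y)$ only when $o(y)$ is a prime power. Since $1\notin C$ (as $C\neq\mathcal{S}$), if $o(y)$ were not a prime power then the only $x\in\langle y\rangle$ with $x\mathtt{N}y$ would satisfy $\langle x\rangle=\langle y\rangle$, making $C$ a single $\diamond$-class and contradicting compoundness. Hence $o(y)=p^{r}$, and $r\ge2$ because $r=1$ again yields a single $\diamond$-class.

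Now $\langle y\rangle$ is a cyclic $p$-group, its subgroups form a chain with orders $p^{0},\dots,p^{r}$, the $\diamond$-classes inside it are exactly the sets of elements of each fixed order $p^{i}$, and (A) holds automatically. So for $x$ of order $p^{i}$ with $1\le i\le r$ we get $x\mathtt{N}y$ iff (B) holds, a condition I call $P(i)$. The decisive point is that $P(i)$ is monotone: if $P(i)$ holds and $\langle z\rangle\ge\langle x_{i+1}\rangle\ge\langle x_{i}\rangle$ with $\langle z\rangle\not\le\langle y\rangle$, then $P(i)$ already forces $\langle y\rangle\le\langle z\rangle$, so $P(i+1)$ holds. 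As $P(r)$ is trivially true, the set $\{i:P(i)\}$ is an up-interval $\{s+1,\dots,r\}$, whence $C=\{z\in\langle y\rangle:p^{s+1}\le o(z)\le p^{r}\}$; compoundness forces at least two orders, i.e. $s\le r-2$, so $s\in[r-2]_{0}$. The remaining assertions follow by inspection: $p$ and $r$ are read off as the unique prime and the top exponent, $s$ from the minimal exponent; the elements of $\langle y\rangle$ of order exceeding $p^{s}$ number $p^{r}-p^{s}$; and every $x$ with $[x]_{\mathtt{N}}=C$ has order $p^{i}$ with $i\ge1$, a proper prime power. I expect the main obstacle to be the clean splitting of $N[x]=N[y]$ into (A) and (B) together with the divisor-lattice argument forcing $o(y)$ to be a prime power; once $o(y)=p^{r}$ is secured, the monotonicity yielding the interval shape is straightforward.
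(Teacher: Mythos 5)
Your argument is correct: the reduction of $N[x]=N[y]$ to the two inclusions (A) and (B), the divisor-lattice step forcing $o(y)$ to be a prime power (using $1\notin C$ since $C\neq\mathcal{S}=[1]_{\mathtt{N}}$), and the monotonicity of the condition $P(i)$ along the subgroup chain of $\langle y\rangle$ all hold up, and together they yield exactly the interval description with $s\le r-2$. Note that the paper itself gives no proof of this proposition --- it is quoted from \cite[Proposition 5]{Cameron_2} and \cite[Proposition 15]{BP22} --- but your proof is a correct, self-contained rendering of essentially the same argument used in those sources.
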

The neighbourhood closure comes into play for the following crucial definition.
\begin{definition}
	\label{DefCriticalClass}
			\rm	Let $\Gamma$ be a power graph. A \emph{critical class} is an $\mathtt{N}$-class $C$ such that $\hat{C}=C \cupdot \{1\}$ and there exist a prime number $p$ and an integer $r\geq 2$, with $|\hat{C}|=p^r$. 
					\end{definition}
Note that, in particular, within the definition of critical class $C$, we are requiring that its neighbourhood closure is the minimum possible. Indeed, by \cite[Proposition 2]{BP22}, we know that $\hat{C}\supseteq C\cup \mathcal{S}\supseteq C\cup \{1\}$.			
		\begin{example}\label{controesempio}{\rm \cite[Examples 12 and 15]{BP22}}
		{\rm In $G=\mathrm{D}_{30}$ the ${\mathtt{N}}$-class of an element of order $15$ is plain and critical. In $S_4$ the $\mathtt{N}$-class of a $4$-cycle is compound and critical.
	The class $\mathcal{S}$ is never critical because it contains $1.$
}
\end{example}

	Classes of compound type that are critical are easily characterized.
	
\begin{prop}\label{par-crit-comp}{\rm\cite[pag. 11]{BP22}} Let $G$ be a group and $C\ne \mathcal{S}$ be an $\mathtt{N}$-class of compound type. $C$ is critical if and only if $C$ has parameters $(p,r,0)$.
\end{prop}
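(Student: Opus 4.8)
The plan is to reduce the entire statement to the single identity $\hat C=\langle y\rangle$, where $y$ is a root of $C$; once this is available, both implications collapse into a comparison of $\langle y\rangle$ with $C\cupdot\{1\}$. Recall from Proposition \ref{propC_y} that $o(y)=p^{r}$ with $r\geq 2$ and $C=\{z\in\langle y\rangle : p^{s+1}\leq o(z)\leq p^{r}\}$, where $(p,r,s)$ are the parameters of $C$. Granting $\hat C=\langle y\rangle$, the size requirement $|\hat C|=p^{r}$ with $r\geq 2$ in Definition \ref{DefCriticalClass} holds automatically, so $C$ is critical if and only if $\hat C=C\cupdot\{1\}$, that is, $\langle y\rangle=C\cupdot\{1\}$. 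Since $C\cupdot\{1\}$ consists of $1$ together with the elements of $\langle y\rangle$ of order in $\{p^{s+1},\dots,p^{r}\}$, it equals all of $\langle y\rangle$ precisely when no order among $p,\dots,p^{s}$ is omitted, i.e. when $p^{s+1}=p$, i.e. $s=0$. This disposes of both directions at once.

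It remains to establish $\hat C=\langle y\rangle$. Because $C$ is a single $\mathtt{N}$-class, every $x\in C$ satisfies $N[x]=N[y]$, so the common closed neighbourhood is $N[C]=N[y]$ and hence $\hat C=N[N[y]]$. Writing $N[y]=\langle y\rangle\cupdot B$, where $B=\{v\in G : \langle y\rangle\subsetneq\langle v\rangle\}$, I would first check $\langle y\rangle\subseteq\hat C$: for $w\in\langle y\rangle$ and any $z\in N[y]$, the cyclic groups $\langle w\rangle$ and $\langle z\rangle$ are comparable (if $z\in\langle y\rangle$ both lie in the chain of subgroups of the cyclic $p$-group $\langle y\rangle$, while if $z\in B$ then $\langle w\rangle\subseteq\langle y\rangle\subseteq\langle z\rangle$), so $w\in N[z]$; thus $w\in N[N[y]]=\hat C$. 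The reverse containment $\hat C\subseteq N[y]=\langle y\rangle\cupdot B$ is immediate from $y\in N[y]$, so everything comes down to showing that no element of $B$ lies in $\hat C$.

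This exclusion is the heart of the argument and the step I expect to be the main obstacle. Suppose $w\in B\cap\hat C$. On one hand, $w\in\hat C=N[N[y]]$ means $w\in N[v]$ for every $v\in N[y]$, equivalently $N[y]\subseteq N[w]$. On the other hand, since $w\in B$ we have $\langle y\rangle\subsetneq\langle w\rangle$ with both groups cyclic: any $v$ with $\langle v\rangle\subseteq\langle w\rangle$ has $\langle v\rangle$ comparable to $\langle y\rangle$ inside the chain of subgroups of $\langle w\rangle$, giving $\langle w\rangle\subseteq N[y]$, while any $v$ with $\langle w\rangle\subseteq\langle v\rangle$ satisfies $\langle y\rangle\subseteq\langle w\rangle\subseteq\langle v\rangle$ and so $v\in N[y]$; hence $N[w]\subseteq N[y]$. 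Combining the two inclusions yields $N[w]=N[y]$, so $w\,\mathtt{N}\,y$ and therefore $w\in[y]_{\mathtt{N}}=C\subseteq\langle y\rangle$, contradicting $w\in B$. Thus $B\cap\hat C=\varnothing$ and $\hat C=\langle y\rangle$, which completes the proof. The key idea is precisely that membership of $w$ in the neighbourhood closure forces $N[y]\subseteq N[w]$, and, paired with the automatic reverse inclusion coming from $\langle y\rangle\subsetneq\langle w\rangle$ in a cyclic group, this collapses any candidate super-generator $w$ back into the class $C$.
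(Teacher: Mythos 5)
The paper does not actually prove this proposition itself---it cites \cite[pag.~11]{BP22}---so I can only assess your argument on its own terms. Your overall strategy, namely reducing everything to the single identity $\hat C=\langle y\rangle$ and then reading off $s=0$ by comparing $\langle y\rangle$ with $C\cupdot\{1\}$, is the right one, and the first two containments ($\langle y\rangle\subseteq\hat C$ and $\hat C\subseteq N[y]$) are correct as written, as is the final arithmetic reduction: once $|\hat C|=p^{r}$ is automatic, criticality is equivalent to $\hat C=C\cupdot\{1\}$, which by Proposition \ref{propC_y} happens exactly when $s=0$.

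The gap is in the exclusion step $B\cap\hat C=\varnothing$. There you argue that any $v$ with $\langle v\rangle\subseteq\langle w\rangle$ has $\langle v\rangle$ comparable to $\langle y\rangle$ ``inside the chain of subgroups of $\langle w\rangle$''. But the subgroups of a cyclic group form a chain only when that group is a $p$-group; $C_{6}$ already contains the incomparable subgroups $C_{2}$ and $C_{3}$. You checked this hypothesis for $\langle y\rangle$ in the first containment but never for $\langle w\rangle$: nothing in your argument rules out $o(w)=p^{t}\ell$ with $\ell\neq p$ a prime, in which case $\langle w\rangle$ contains an element $v$ of order $\ell$ lying in $N[w]\setminus N[y]$, and your inclusion $N[w]\subseteq N[y]$ fails. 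Note that the compoundness of $C$ is used nowhere in this step, yet it must enter somewhere, because $\hat C=\langle y\rangle$ is simply false for plain classes (e.g.\ for the class of an element of order $p^{r}$ in the cyclic group $C_{p^{r}q}$, whose closure also contains all generators of the group). The repair: since $s\leq r-2$, the element $z:=y^{p^{r-s-1}}\in C$ has order $p^{s+1}\leq p^{r-1}$ and satisfies $N[z]=N[y]$; if a prime $\ell\neq p$ divided $o(w)$ for some $w$ with $\langle y\rangle<\langle w\rangle$, then the element $u\in\langle w\rangle$ of order $p^{s+1}\ell$ would satisfy $\langle z\rangle<\langle u\rangle$ (as $\langle z\rangle$ is the unique subgroup of order $p^{s+1}$ of $\langle w\rangle$), hence $u\in N[z]=N[y]$, yet $\langle u\rangle$ and $\langle y\rangle$ are incomparable because $p^{s+1}\ell\nmid p^{r}$ and $p^{r}\nmid p^{s+1}\ell$. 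This contradiction forces every $w\in B$ to have $p$-power order, after which your chain argument, and with it the whole proof, goes through.
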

We finally state a sufficient condition for a plain class to be critical.
\begin{prop} \label{CarachetisationN-classes_2}{\rm \cite[Proposition 14]{BP22}}	Let $G$ be a group and $C$ an $\mathtt{N}$-class of plain type. Assume that there exist a prime $p$ and integers $r\geq 2$ and $s\in [r-2]_0$ such that  $|C|=p^r-p^s$ and $|\hat{C}|=p^r$. Then $C$ is critical, $s=0$, and
$C=[y]_{\diamond}$ for some $y\in G$, with $o(y)>1$ not a prime power and such that $\phi(o(y))=p^r-1$.
\end{prop}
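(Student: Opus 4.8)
The plan is to work with the neighbourhood closure directly and to read off everything from a ``chain of totients'' description of $\hat{C}$. Since $C$ is of plain type it is a single $\diamond$-class, so $C=[y]_\diamond$ is the set of generators of $Y:=\langle y\rangle$ and $|C|=\phi(n)$, where $n:=o(y)$. From $|C|=p^r-p^s=p^s(p^{r-s}-1)$ with $r\ge 2$ and $s\le r-2$ one gets $|C|\ge p^2-1>1$, so $n>1$ and $1\notin C$; in particular $C\ne\mathcal{S}$ by Example~\ref{S-comp}. The whole argument then rests on pushing the two size constraints $|C|=p^r-p^s$ and $|\hat{C}|=p^r$ against each other.

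First I would establish the structural backbone. Because every element of $C$ has the same closed neighbourhood, $N[C]=N[y]=:M$, whence $\hat{C}=N[M]\subseteq N[y]=M$. Consequently any two elements $w_1,w_2\in\hat{C}$ are comparable (as $w_2\in M$ and $w_1\in N[M]\subseteq N[w_2]$), so the distinct cyclic subgroups $\{\langle w\rangle : w\in\hat{C}\}$ form a chain $1=Z_0<Z_1<\dots<Z_k$. Since $\langle w\rangle=\langle w'\rangle$ forces $N[w]=N[w']$, the set $\hat{C}$ is a union of $\diamond$-classes, one for each $Z_i$ (these include $\{1\}$ and $C$, as $1\in\hat{C}$ and $C\subseteq\hat{C}$). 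Writing $d_i:=|Z_i|$ we obtain $|\hat{C}|=\sum_{i=0}^k\phi(d_i)=p^r$, with $Y=Z_j$ and $C$ the generators of $Z_j$ for some $1\le j\le k$.

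Next I would show there is nothing above $Y$ in the chain, i.e.\ $k=j$. Suppose $Z_{j+1}>Y$. Since $n=d_j\mid d_{j+1}$ and $\phi$ is monotone along divisibility, $\phi(d_{j+1})\ge\phi(n)=|C|$. Isolating the three distinct terms $\phi(d_0)=1$, $\phi(d_j)=|C|$ and $\phi(d_{j+1})$ in the sum gives $p^r\ge 1+2|C|=1+2(p^r-p^s)$, i.e.\ $2p^s>p^r$, which contradicts $p^{r-s}\ge p^2\ge 4$. Hence $\hat{C}\subseteq Y$.

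Finally I would read off the conclusion from the part of the chain inside $Y$. Since $Y\subseteq M$, every $w\in\hat{C}\subseteq Y$ is comparable to all elements of $Y$, so each $d_i$ is a divisor of $n$ comparable (under divisibility) to every divisor of $n$. If $n$ is a prime power then all its divisors are comparable and in fact $Y\subseteq\hat{C}$, so $\hat{C}=Y$ and $p^r=|\hat{C}|=n$, $p^s=|\hat{C}|-|C|=n-\phi(n)$; for $n=q^m$ this forces $p=q$, $r=m$ and $s=m-1=r-1$, contradicting $s\le r-2$. Therefore $n$ is not a prime power, and then the only divisors of $n$ comparable to all divisors are $1$ and $n$, so the chain inside $Y$ is just $1<Y$, giving $\hat{C}=\{1\}\cupdot C$ and $|\hat{C}|=1+\phi(n)=p^r$. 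Hence $s=0$, $\phi(o(y))=p^r-1$, and $\hat{C}=C\cupdot\{1\}$ with $|\hat{C}|=p^r$, $r\ge 2$, so $C$ is critical by Definition~\ref{DefCriticalClass}. The main obstacle is the structural step showing that $\hat{C}$ lies inside the single closed neighbourhood $N[y]$ and is therefore a chain of $\diamond$-classes; once this picture is in place, the two prime-power size hypotheses finish the job by elementary counting and the monotonicity of $\phi$.
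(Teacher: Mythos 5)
This proposition is quoted in the paper from \cite[Proposition 14]{BP22} without proof, so there is no in-paper argument to compare against; I have therefore checked your proof on its own terms, and it is correct and complete. The key structural observation --- that $\hat{C}=N[N[C]]=N[N[y]]\subseteq N[y]$, so that any two elements of $\hat{C}$ are comparable and $\hat{C}$ decomposes as a chain of $\diamond$-classes $1=Z_0<\dots<Z_k$ with $|\hat{C}|=\sum_i\phi(|Z_i|)$ --- is sound, and each subsequent step holds up: the estimate $p^r\geq 1+2(p^r-p^s)$ correctly kills any term above $Y$ in the chain; the case $n$ a prime power correctly yields $\hat{C}=Y$ and hence $s=r-1$, contradicting $s\in[r-2]_0$; and once $n$ is not a prime power, the observation that only the divisors $1$ and $n$ of $n$ are divisibility-comparable to every divisor of $n$ forces $\hat{C}=C\cupdot\{1\}$, giving $s=0$, $\phi(o(y))=p^r-1$, and criticality via Definition \ref{DefCriticalClass}. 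One small point worth making explicit in a final write-up is why $\hat{C}$ is a union of $\diamond$-classes (adjacency in $\mathcal{P}(G)$ depends only on the generated cyclic subgroups, so $\bigcap_{m\in N[y]}N[m]$ is $\diamond$-saturated), but you do flag this and it is immediate.
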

		
		%\begin{corollary}{\rm \cite[Theorem 2 ]{Cameron_2}}\label{cCameron_2}
		%	\label{UPG-DPG}
		%	If $G_1$ and $G_2$ are finite groups whose power graphs are isomorphic, then their directed power graphs are also isomorphic.
		%\end{corollary}

		\section{Critical elements}
		
		We start our research by defining the concept of critical, plain and compound element in a finite group.
		\begin{definition}
	\label{DefCriticalGrouElements}
			\rm	Let $G$ be a group. We say that $ x\in G$ is 
			\begin{itemize}
			\item[1.] \emph{ critical} if $[x]_{\mathtt{N}}$ is a critical class in $\mathcal{P}(G)$;
			\item[2.] \emph{plain} [\emph{compound}] if $[x]_{\mathtt{N}}$ is a plain [compound] class in $\mathcal{P}(G)$.
\end{itemize}

					\end{definition}

If $x\in G$ is a plain [compound] element and also a critical element, then we simply call it \emph{plain} [\emph{compound}] \emph{critical}.
Obviously,  $x$ is critical if and only if $x$ is plain critical or compound critical. Since the class $[1]_{\mathtt{N}}=\mathcal{S}$ is never critical, $1$ is never critical.

\begin{prop}\label{crit_S_1}
	Let $G$ be a group. If there exists $x\in G$ critical, then $\mathcal{S}=\{1\}$ and $G\neq 1.$
\end{prop}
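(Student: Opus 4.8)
The plan is to exploit the two containments $\hat{C}\supseteq C\cup\mathcal{S}\supseteq C\cup\{1\}$ recorded just after Definition \ref{DefCriticalClass}, together with the identification $\mathcal{S}=[1]_{\mathtt{N}}$ from Example \ref{S-comp}. First I would fix a critical element $x\in G$ and set $C:=[x]_{\mathtt{N}}$, which by Definition \ref{DefCriticalGrouElements} is a critical class. By Definition \ref{DefCriticalClass} this means $\hat{C}=C\cupdot\{1\}$; in particular the neighbourhood closure of $C$ equals $C\cup\{1\}$ and $1\notin C$.

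The key step is to sandwich $\mathcal{S}$ between these sets. From the cited containment we have $\mathcal{S}\subseteq\hat{C}$, and since $\hat{C}=C\cup\{1\}$ this yields $\mathcal{S}\subseteq C\cup\{1\}$. Now $\mathcal{S}=[1]_{\mathtt{N}}$ is never a critical class, because it contains $1$ (as observed in Example \ref{controesempio}), whereas $C$ is critical by hypothesis; hence $C\neq\mathcal{S}$, and being distinct $\mathtt{N}$-classes they are disjoint. Intersecting $\mathcal{S}\subseteq C\cup\{1\}$ with the complement of $C$ then forces $\mathcal{S}\subseteq\{1\}$. Since $1$ is always a star vertex — indeed $\langle 1\rangle\leq\langle y\rangle$ for every $y\in G$, so that $N[1]=G$ — we have $1\in\mathcal{S}$, and therefore $\mathcal{S}=\{1\}$.

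Finally, the assertion $G\neq 1$ is immediate: a critical element $x$ lies in $C$, which is disjoint from $\{1\}$, so $x\neq 1$ and $G$ contains a non-identity element. I do not expect any genuine obstacle here, since the statement is a direct bookkeeping consequence of the established properties of $\hat{C}$ and $\mathcal{S}$. The only points requiring care are to invoke the disjointness of distinct $\mathtt{N}$-classes correctly and to use that $\mathcal{S}$ coincides with the $\mathtt{N}$-class of the identity, rather than treating it merely as the set of star vertices.
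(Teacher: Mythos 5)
Your proof is correct and follows essentially the same route as the paper: both arguments rest on the containment $\mathcal{S}\subseteq\hat{C}=C\cupdot\{1\}$ from \cite[Proposition 2]{BP22} and on the fact that $\mathcal{S}=[1]_{\mathtt{N}}$ is an $\mathtt{N}$-class containing $1$ while $C$ does not contain $1$, so the two classes are distinct and hence disjoint. The paper phrases this as a contradiction (a non-identity star vertex would force $C=\mathcal{S}$ and thus $1\in C$), whereas you argue directly that $\mathcal{S}\subseteq\{1\}$; the difference is purely organizational.
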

\begin{proof}
	Let $x\in G$ be critical and consider $X=[x]_{\mathtt{N}}$. Then we have $\hat{X}=X \cupdot \{1\}$. In particular, $1\notin X$. Assume, by absurd, that $\mathcal{S}\ne \{1\}$. Then there exists $s\in \mathcal{S}\setminus \{1\}$. Since, by \cite[Proposition 2]{BP22}, we have $\mathcal{S}\subseteq \hat{X}= X \cupdot \{1\}$, then $s \in X$. As a consequence, we have $X=\mathcal{S}=[1]_{\mathtt{N}}$ and hence $1\in X$, a contradiction. Finally note that, since $x$ is critical we know that  $x\neq 1$. Thus $G\neq 1.$
\end{proof}

The previous proposition is a first basic  result that links  the presence of critical elements with group properties. One of the scopes of this section is to deepen this link. To  that purpose, we first explore  the crucial links between criticality of elements and their orders.

\begin{prop}\label{ord-pl-cr} Let $G$ be a group and $x\in G$ be critical. Then $x$ is compound if and only if $o(x)$ is a proper prime power.
\end{prop}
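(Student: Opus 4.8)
The plan is to treat the two implications separately, after first reducing to the situation $C:=[x]_{\mathtt{N}}\neq \mathcal{S}$. Since $x$ is critical, Proposition \ref{crit_S_1} gives $\mathcal{S}=\{1\}$ and $x\neq 1$, so indeed $C\neq \mathcal{S}$. By Definition \ref{DefCriticalClass} there exist a prime $p$ and an integer $r\geq 2$ with $\hat{C}=C\cupdot\{1\}$ and $|\hat{C}|=p^r$; in particular $|C|=p^r-1$. These two facts will feed both directions.

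For the forward implication, suppose $x$ is compound. Then $C$ is a compound $\mathtt{N}$-class with $C\neq \mathcal{S}$, so the final assertion of Proposition \ref{propC_y} applies to $x$ and yields immediately that $o(x)$ is a proper prime power. I note that this direction does not even use criticality beyond the reduction $C\neq \mathcal{S}$.

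For the converse I would argue by contraposition: assuming $x$ is not compound, I show that $o(x)$ is not a proper prime power. Since $x$ is critical, it is either plain critical or compound critical, and being not compound forces $C$ to be a plain critical class. Hence $C=[x]_{\diamond}$ is the set of generators of $\langle x\rangle$, so that $|C|=\phi(o(x))$. Combining this with $|C|=p^r-1=p^r-p^0$, I can invoke Proposition \ref{CarachetisationN-classes_2} with $s=0\in[r-2]_0$ (legitimate since $r\geq 2$) and $|\hat{C}|=p^r$. That proposition then guarantees that $o(x)$ is not a prime power at all, so in particular not a proper prime power, completing the contrapositive.

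The argument is essentially a bookkeeping exercise that routes each case to the appropriate structural characterization, so I do not expect a genuine obstacle. The only point requiring care is checking that the hypotheses of Proposition \ref{CarachetisationN-classes_2} are genuinely met: namely that the equality $|C|=\phi(o(x))$ for a plain class matches the required form $p^r-p^s$ with the admissible choice $s=0$, and that $C\neq \mathcal{S}$ so that these structural propositions are applicable at all. The heavy structural content is already packaged inside Propositions \ref{propC_y} and \ref{CarachetisationN-classes_2}.
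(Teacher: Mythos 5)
Your proof is correct and follows essentially the same route as the paper: the forward direction via the final assertion of Proposition \ref{propC_y} for compound classes, and the converse via Proposition \ref{CarachetisationN-classes_2} applied with $|C|=p^r-1=p^r-p^0$ (the paper phrases this as a contradiction rather than a contraposition, which is an immaterial difference). All hypothesis checks ($C\neq\mathcal{S}$, $s=0\in[r-2]_0$, the plain/compound dichotomy) are handled correctly.
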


\begin{proof} Assume that $x$ is compound. Then $[x]_{\mathtt{N}}\neq \mathcal{S}$ is compound critical. Hence, by Proposition \ref{propC_y}, $o(x)$ is a proper prime power. Conversely assume that $o(x)$ is a proper prime power. Since $[x]_{\mathtt{N}}$ is critical, we have $|[x]_{\mathtt{N}}|=p^r-1,$ for some prime $p$ and $r\geq 2$ integer. Then Proposition \ref{CarachetisationN-classes_2} applies, giving $o(x)$ not a prime power, a contradiction.	 
\end{proof}

%We now want to reach a characterization of plain critical elements, that is Proposition \ref{crit-2UP_v2} which is clearly proven thanks to the following Lemma.

\begin{lemma}\label{oss-crit-2UP}
	Let G be a group and $x\in G$, with $o(x)$ not a prime power and $G\ne \langle x \rangle$. Then $x$ is plain and the following facts are equivalent:
	
	\begin{itemize}
		\item[$(i)$] $\widehat{[x]_{\mathtt{N}}}= [x]_{\mathtt{N}}\cupdot \{1\}$;
		\item[$(ii)$] for every $y\in G$ such that $\langle x\rangle < \langle y \rangle$, there exists $z \in G\setminus N[y]$ such that $\langle x\rangle < \langle z \rangle$. 
	\end{itemize}
\end{lemma}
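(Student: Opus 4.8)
The plan is to settle the plainness claim first and then to reduce the equivalence $(i)\Leftrightarrow(ii)$ to a single statement about the elements generating a cyclic group strictly above $\langle x\rangle$. Throughout I set $X:=[x]_{\mathtt N}$ and, anticipating the reduction, $W:=\{z\in G:\langle x\rangle<\langle z\rangle\}$.

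\textbf{Plainness.} Since $o(x)$ is not a prime power, $x\neq 1$, so I only have to exclude that $X$ is compound. If $X\neq\mathcal S$ were compound, then Proposition \ref{propC_y} would force $o(x)$ to be a proper prime power, a contradiction. If instead $X=\mathcal S$, then $|\mathcal S|\geq 2$ (as $1\neq x$ both lie in $\mathcal S$), so Example \ref{S-comp}$(ii)$ leaves only cases (a),(b),(c): (a) and (c) make $o(x)$ a prime power, while (b) forces $x$ to be a generator of $G$, i.e.\ $\langle x\rangle=G$, against the hypothesis $G\neq\langle x\rangle$. Hence $X\neq\mathcal S$ and $X$ is plain, so $X=[x]_\diamond$ is exactly the set of generators of $\langle x\rangle$.

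\textbf{Reduction.} Because every $v\in X$ satisfies $N[v]=N[x]$, I get $N[X]=N[x]$ and therefore $\widehat{X}=N[N[x]]$; as $x\in N[x]$, this already gives $\widehat X\subseteq N[x]$. Writing $y\in N[x]$ out (using $\langle y\rangle\leq\langle x\rangle \iff y\in\langle x\rangle$) yields the disjoint decomposition $N[x]=\langle x\rangle\cupdot W$, and hence $\widehat X=(\widehat X\cap\langle x\rangle)\cupdot(\widehat X\cap W)$. The heart of the argument is computing $\widehat X\cap\langle x\rangle$: an element $z\in\langle x\rangle$ is automatically adjacent to every $y\in W$ (since $\langle z\rangle\leq\langle x\rangle<\langle y\rangle$), so $z\in\widehat X$ iff $z$ is adjacent to every element of $\langle x\rangle$. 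Inside the cyclic group $\langle x\rangle$ this means $o(z)$ is comparable under divisibility to every divisor of $o(x)$, which, \emph{precisely because $o(x)$ is not a prime power}, forces $o(z)\in\{1,o(x)\}$. Combined with the inclusion $\widehat X\supseteq X\cup\{1\}$ from \cite[Proposition 2]{BP22}, this gives $\widehat X\cap\langle x\rangle=X\cupdot\{1\}$.

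\textbf{Conclusion.} For $z\in W$ adjacency to all of $\langle x\rangle$ is again automatic, so $z\in\widehat X$ iff $W\subseteq N[z]$; that is, $\widehat X\cap W=\{z\in W:W\subseteq N[z]\}$. Consequently $(i)$, namely $\widehat X=X\cupdot\{1\}$, is equivalent to $\widehat X\cap W=\varnothing$, i.e.\ to the nonexistence of $z\in W$ with $W\subseteq N[z]$. On the other hand $(ii)$ reads ``for every $y\in W$ there is $z\in W$ with $z\notin N[y]$'', whose negation is exactly ``there exists $y\in W$ with $W\subseteq N[y]$''. By symmetry of $N[\cdot]$ these two negations coincide, so $(i)\Leftrightarrow(ii)$. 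I expect the main obstacle to be the local computation of $\widehat X\cap\langle x\rangle$: pinning down which elements of the cyclic group $\langle x\rangle$ are adjacent to all of $\langle x\rangle$, and checking carefully that the non-prime-power hypothesis collapses this set to $\{1\}$ together with the generators, while keeping the disjoint bookkeeping of $N[x]=\langle x\rangle\cupdot W$ correct.
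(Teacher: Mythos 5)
Your proof is correct. The plainness part is identical to the paper's. For the equivalence, the paper argues the two implications separately and by contradiction: for $(i)\Rightarrow(ii)$ it extracts from $y\notin\widehat X=\bigcap_{z\in N[x]}N[z]$ some $z\in N[x]$ with $y\notin N[z]$ and rules out $z\in\langle x\rangle$; for $(ii)\Rightarrow(i)$ it takes $y\in\widehat X\setminus(X\cup\{1\})$ and splits into the cases $\langle y\rangle<\langle x\rangle$ (handled by observing $y$ is not a star vertex of $\mathcal P(\langle x\rangle)$) and $\langle x\rangle<\langle y\rangle$ (handled by the hypothesis). You instead compute $\widehat X$ once and for all: using $N[x]=\langle x\rangle\cupdot W$ with $W=\{z:\langle x\rangle<\langle z\rangle\}$, you show $\widehat X=\bigl(X\cupdot\{1\}\bigr)\cupdot\{z\in W: W\subseteq N[z]\}$, after which both $(i)$ and $(ii)$ are visibly equivalent to the emptiness of the second piece. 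The ingredients are the same — the formula $\widehat X=\bigcap_{w\in N[x]}N[w]$ from \cite[Proposition 2]{BP22}, and the fact that in a cyclic group of non-prime-power order the only vertices joined to everything are the identity and the generators (your divisibility argument is exactly Example \ref{S-comp}$(ii)$(b) applied to $\mathcal P(\langle x\rangle)$, which you could simply cite) — but your organization is arguably cleaner: it yields both directions simultaneously and makes transparent that the whole content of the lemma is the identification of $\widehat X\cap W$. The one step you should spell out slightly more (as you yourself flag) is the claim that comparability of $o(z)$ with every divisor of $o(x)$ forces $o(z)\in\{1,o(x)\}$; it does hold, by picking for $1<o(z)<o(x)$ a suitable prime-power divisor $d$ of $o(x)$ incomparable with $o(z)$, using that $o(x)$ has at least two prime divisors.
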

\begin{proof} Let  $X:= [x]_{\mathtt{N}}$. To start with, we show that $X\neq \mathcal{S}$. Assume the contrary. Then, by Example \ref{S-comp}, one of the following holds: $x=1$, $o(x)=2$ or $\langle x \rangle=G$. However, $x=1$, $o(x)=2$ are incompatible with $o(x)$ not a prime power  and $\langle x \rangle=G$ is excluded by hypothesis. As a consequence, we can apply Proposition \ref{propC_y} and deduce that $x$ is plain. We now show that $(i)$ and $(ii)$ are equivalent.
\smallskip

$(i)\Rightarrow (ii)$	Assume that $\hat{X}= X\cupdot \{1\}$.  Let $y\in G$ be such that $\langle x\rangle < \langle y \rangle$. Then
$x\in \langle y \rangle$ and $o(x)<o(y)$.
	Since $x$ is plain, we have $\hat{X}=[x]_{\diamond}\cupdot \{1\}$ and hence $y\notin \hat{X}$. By \cite[Proposition 2]{BP22}, we have that  $\hat{X}= \bigcap_{z\in N[x]}N[z]$.
	As a consequence, there exists $z\in N[x]$ such that $y\notin N[z]$.
	By the fact that $z\in N[x]$, we have that $z\in \langle x \rangle$ or $x \in \langle z \rangle$.
	Assume that $z \in \langle x \rangle$. Then, since $x \in \langle y \rangle$, we deduce $z \in \langle y \rangle$ and hence $y\in N[z]$, a contradiction.
	Hence $x\in \langle z \rangle$ and $o(x)< o(z)$. 
	Finally note that $z \notin N[y]$ follows by $y \notin N[z]$.
\smallskip

$(ii)\Rightarrow (i)$
	Assume that, for every $y\in G$ such that $\langle x\rangle < \langle y \rangle$ there exists $z \in G\setminus N[y]$ such that $\langle x\rangle < \langle z \rangle$.
	 %Since $x$ is plain we have that $[x]_{\mathtt{N}}=[x]_{\diamond}$ and then $|X|=\phi(o(x))=p^r-1$.
	 We know that $X\ne \mathcal{S}$  and thus  $1\notin X$. By \cite[Proposition 2]{BP22} we also know that $\hat{X}\supseteq X \cup \{1\}$.
	So, we only need to check that $\hat{X}\subseteq X \cup \{1\}$.
	Assume, by absurd, that there exists $y\in \hat{X}\setminus(X\cup \{1\})$.
	Then  $y \in N[x]$ and  $y$ is not a generator of $\langle x \rangle$ nor the identity. Assume first that $\langle y \rangle <  \langle x \rangle$. Then $y\in \langle x \rangle$ is not a star vertex for $\mathcal{P}(\langle x \rangle)$ and hence there exists $z\in N[x]\supseteq \langle x \rangle$ not joined with $y$. Since, by \cite[Proposition 2]{BP22}, we have $\hat{X}= \bigcap_{z\in N[x]}N[z]$, we deduce that $y\notin \hat{X}$, a contradiction. Assume next that  $\langle x \rangle < \langle y \rangle$.
	Then, by hypothesis, there exists $z \in G\setminus N[y]$ such that $\langle x \rangle < \langle z \rangle$.
	It follows that $z \in N[x]$ and $y \notin N[z]$. So $y \notin \hat{X}$, a contradiction.
\end{proof}

\begin{prop}\label{crit-2UP_v2}
	Let $G$ be a group and $x\in G$, with $o(x)$ not a prime power and $G\ne \langle x \rangle$. Assume further that  $\phi(o(x))= p^r-1$, for some prime $p$ and some integer $r \geq 2$. Then
	the following facts are equivalent:
	\begin{itemize}
		\item[$(i)$] $x$ is plain critical;
		\item[$(ii)$] for every $y\in G$ such that $\langle x\rangle < \langle y \rangle$, there exists $z \in G\setminus N[y]$ such that $\langle x\rangle < \langle z \rangle$. 
	\end{itemize}
\end{prop}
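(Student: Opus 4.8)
The plan is to deduce the statement almost entirely from Lemma~\ref{oss-crit-2UP}, whose hypotheses ($o(x)$ not a prime power and $G\neq\langle x\rangle$) are exactly ours. That lemma supplies two facts for free: first, that $x$ is automatically plain; and second, that condition $(ii)$ is equivalent to the neighbourhood-closure equality $\widehat{[x]_{\mathtt{N}}}=[x]_{\mathtt{N}}\cupdot\{1\}$. Consequently the whole proposition reduces to showing that, under the additional arithmetic hypothesis $\phi(o(x))=p^r-1$, the assertion ``$x$ is plain critical'' is equivalent to this same closure equality. The role of $\phi(o(x))=p^r-1$ will be precisely to convert the bare closure equality into full criticality by pinning down the cardinality of $\widehat{[x]_{\mathtt{N}}}$.

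For the implication $(i)\Rightarrow(ii)$ I would argue directly: if $x$ is plain critical then $[x]_{\mathtt{N}}$ is a critical class, so Definition~\ref{DefCriticalClass} gives $\widehat{[x]_{\mathtt{N}}}=[x]_{\mathtt{N}}\cupdot\{1\}$, and Lemma~\ref{oss-crit-2UP} then yields $(ii)$. For $(ii)\Rightarrow(i)$, Lemma~\ref{oss-crit-2UP} first returns the closure equality $\widehat{[x]_{\mathtt{N}}}=[x]_{\mathtt{N}}\cupdot\{1\}$. The key step is the size count: since $x$ is plain, $[x]_{\mathtt{N}}$ is a single $\diamond$-class, namely the set of generators of $\langle x\rangle$, whence $|[x]_{\mathtt{N}}|=\phi(o(x))=p^r-1$. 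Because $1\notin[x]_{\mathtt{N}}$ (as $o(x)\neq1$), the disjoint union forces $|\widehat{[x]_{\mathtt{N}}}|=p^r$, and with $r\geq2$ this meets exactly the cardinality requirement of Definition~\ref{DefCriticalClass}. Hence $[x]_{\mathtt{N}}$ is critical; being also plain, $x$ is plain critical.

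I do not expect a genuine obstacle here: the content is the bookkeeping that matches the two halves of the definition of a critical class. The only point demanding care is that criticality requires \emph{both} the minimality $\widehat{C}=C\cupdot\{1\}$ \emph{and} the prime-power size $|\widehat{C}|=p^r$ with $r\geq2$; the hypothesis on $\phi(o(x))$ is calibrated to deliver the second the moment plainness identifies $|[x]_{\mathtt{N}}|$ with $\phi(o(x))$. I would also record at the outset that $o(x)\neq1$, so that $x\neq1$ and the union with $\{1\}$ is genuinely disjoint; this is immediate, since $1$ is a prime power whereas $o(x)$ is assumed not to be.
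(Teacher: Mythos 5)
Your proposal is correct and follows essentially the same route as the paper: both directions are funneled through Lemma~\ref{oss-crit-2UP}, and the implication $(ii)\Rightarrow(i)$ is completed by the same cardinality count $|\widehat{[x]_{\mathtt{N}}}|=|[x]_{\diamond}|+1=\phi(o(x))+1=p^r$. The extra remarks (that $1\notin[x]_{\mathtt{N}}$ and that both clauses of Definition~\ref{DefCriticalClass} must be verified) are sound and consistent with the paper's argument.
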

\begin{proof}%\textcolor{blue}{Fa da riferimento, ma credo si possa omettere.}
	$(i)\Rightarrow (ii)$ Assume that $x$ is plain critical. Then $\widehat{[x]_{\mathtt{N}}}= [x]_{\mathtt{N}}\cupdot \{1\}$ holds. Therefore, by Lemma \ref{oss-crit-2UP}, we have that also $(ii)$ holds.
	\smallskip
	
	$(ii)\Rightarrow (i)$ Assume that, for every $y\in G$ such that $\langle x\rangle < \langle y \rangle$ there exists $z \in G\setminus N[y]$ such that $\langle x\rangle < \langle z \rangle$. By Lemma \ref{oss-crit-2UP} we have that $x$ is plain and that $\widehat{[x]_{\mathtt{N}}}= [x]_{\mathtt{N}}\cupdot \{1\}$ holds. It follows that $\widehat{[x]_{\mathtt{N}}}= [x]_{\diamond}\cupdot \{1\}$. Then, using   $\phi(o(x))=p^r-1$, we obtain $|\widehat{[x]_{\mathtt{N}}}|= |[x]_{\diamond}|+1= p^r$. Therefore $[x]_{\mathtt{N}}$ is plain critical and hence $x$ is plain critical.
\end{proof}

From the above proposition we deduce a number of results that are useful for at least two reasons. They allow to recognize plain critical elements and to obtain an unexpected application to the theory of partitions (see Proposition \ref{noPartition}).

\begin{corollary}\label{crit-NoUP}
	Let $G\neq 1$ be a group with $\mathcal{S}=\{1\}$, and let $x\in G$ be maximal with $o(x)$ not a prime power. Then $\widehat{[x]_{\mathtt{N}}}=[x]_{\mathtt{N}} \cupdot \{1\}$ and $x$ is plain.
	If further $\phi(o(x))=p^r-1$ for some  prime $p$ and some integer $r\geq2$ holds, then $x$ is critical.
\end{corollary}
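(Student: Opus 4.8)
The plan is to reduce everything to Lemma \ref{oss-crit-2UP} and Proposition \ref{crit-2UP_v2}, both of which apply to an element $x$ with $o(x)$ not a prime power and $G \neq \langle x\rangle$. So the first thing I would do is verify that missing hypothesis, namely that $G \neq \langle x\rangle$. Indeed, if we had $G = \langle x\rangle$, then $G$ would be cyclic and, since $o(x)$ is not a prime power, it would be cyclic but not a $p$-group; by Example \ref{S-comp}$(ii)(b)$ its star set $\mathcal{S}$ would then be the union of $\{1\}$ with the nonempty set of generators of $G$, forcing $|\mathcal{S}| \geq 2$ and contradicting the assumption $\mathcal{S} = \{1\}$. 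Hence $G \neq \langle x\rangle$, and the standing hypotheses of the lemma and the proposition are satisfied.

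The key observation is that the maximality of $x$ trivializes condition $(ii)$ in both results. Since $\langle x\rangle$ is cyclic maximal, there is no $y \in G$ with $\langle x\rangle < \langle y\rangle$, so the universally quantified statement ``for every $y\in G$ such that $\langle x\rangle < \langle y \rangle$ there exists $z \in G\setminus N[y]$ with $\langle x\rangle < \langle z \rangle$'' holds vacuously. Therefore Lemma \ref{oss-crit-2UP} immediately yields both that $x$ is plain and that $\widehat{[x]_{\mathtt{N}}} = [x]_{\mathtt{N}} \cupdot \{1\}$, which is the first assertion.

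For the second assertion, assuming in addition that $\phi(o(x)) = p^r - 1$ with $p$ prime and $r \geq 2$, I would simply invoke Proposition \ref{crit-2UP_v2}: its hypotheses are now all met, and its condition $(ii)$ is again the vacuously true statement, so part $(i)$ gives that $x$ is plain critical, hence critical. Alternatively, one can argue directly from what has already been shown: being plain means $[x]_{\mathtt{N}} = [x]_{\diamond}$, so from $\widehat{[x]_{\mathtt{N}}} = [x]_{\mathtt{N}} \cupdot \{1\}$ we obtain $|\widehat{[x]_{\mathtt{N}}}| = \phi(o(x)) + 1 = p^r$ with $r \geq 2$, which is exactly Definition \ref{DefCriticalClass} of a critical class. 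I do not expect a genuine obstacle in this argument: the entire content is the vacuity of condition $(ii)$ under maximality, and the only point demanding a little care is ruling out the case $G = \langle x\rangle$, which is precisely where the hypothesis $\mathcal{S} = \{1\}$ enters.
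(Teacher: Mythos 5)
Your proof is correct and follows essentially the same route as the paper's: rule out $G=\langle x\rangle$ via $\mathcal{S}=\{1\}$ (the paper phrases this as ``$G$ is not cyclic''), observe that maximality makes condition $(ii)$ of Lemma \ref{oss-crit-2UP} and Proposition \ref{crit-2UP_v2} vacuously true, and conclude. The extra direct computation $|\widehat{[x]_{\mathtt{N}}}|=\phi(o(x))+1=p^r$ is a harmless alternative to invoking Proposition \ref{crit-2UP_v2} for the last step.
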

\begin{proof}  By $\mathcal{S}=\{1\}$ and $G\neq 1$, we deduce that $G$ is not cyclic. Hence, by Lemma \ref{oss-crit-2UP}, $x$ is plain. Since $x$ is maximal,  the condition $(ii)$ of Lemma \ref{oss-crit-2UP} is trivially satisfied. Thus we have $\widehat{[x]_{\mathtt{N}}}=[x]_{\mathtt{N}} \cupdot \{1\}$.\\
 Assume further that  $\phi(o(x))=p^r-1$  holds,  for some  prime $p$ and some integer $r\geq2$. Since the condition $(ii)$ in Proposition \ref{crit-2UP_v2} is satisfied, we deduce that $x$ is critical.
\end{proof}
The next example shows that it is possible to have $x\in G$
 plain critical with $\phi(o(x))=p^r-1$,  for some  prime $p$ and some integer $r\geq2$, with $x$ not maximal.
 \begin{example}
 	The permutation $\sigma:= (123)(45678)\in S_{11}$ is plain critical  and non-maximal.
 \end{example}
 \begin{proof}
 	Firstly note that $\langle \sigma \rangle < \langle (1 2 3)(4 5 6 7 8)(9 \ 10) \rangle$ and hence $\sigma$ is not maximal.
	Moreover, we have $o(\sigma)=15$ and $\phi(15)=3^2-1$. Let $\alpha\in S_{11}$ be such that $\langle \sigma\rangle < \langle \alpha \rangle$. Then $\alpha=\sigma \tau$ for some $\tau\in \{(9 \ 10), (9\ 11), (10\ 11)\}$. Note that $\alpha$ admits a unique fix point. Consider then $\gamma\coloneq \sigma \tau'$ with $\tau'\neq \tau$. Then $\alpha$ and $\gamma$ have same order and different fix points. It follows that $\gamma\in S_{11}\setminus N[\alpha]$ and thus, by Proposition \ref{crit-2UP_v2}, $\sigma$ is plain critical and not maximal.
 \end{proof}

It seems important to better investigate the consequences of the existence of elements which are plain critical but not maximal.
In order to explore that issue, we introduce a minor player of our research, the \emph{enhanced power graph} of $G$, denoted by $\mathcal{E}(G)$. Its vertex set is $ G$ and for $x\neq y\in G$, $\left\lbrace x,y \right\rbrace $ is an edge if  $\langle x,y\rangle$ is cyclic.
\begin{corollary}\label{corollary_crit-2UP}
	Let $G$ be a group and $x\in G$ be plain critical and not maximal. Then there exist $y\in G$ and $z\in G \setminus N_{\mathcal{E}(G)}[y]$ such that  $\langle x \rangle < \langle y \rangle$ and $\langle x \rangle < \langle z \rangle$.
	% there is no $w\in G$ such that $z,y \in \langle w \rangle$.
\end{corollary}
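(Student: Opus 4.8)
The plan is to deduce the statement from the characterisation of plain critical elements in Proposition~\ref{crit-2UP_v2} (equivalently Lemma~\ref{oss-crit-2UP}), and then \emph{upgrade} its conclusion from non-adjacency in the power graph $\mathcal{P}(G)$ to non-adjacency in the enhanced power graph $\mathcal{E}(G)$.

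First I would record the hypotheses needed to invoke condition~$(ii)$. Since $x$ is plain critical it is in particular critical and plain, so by Proposition~\ref{ord-pl-cr} the order $o(x)$ is not a proper prime power; as $1$ is never critical we have $o(x)\neq 1$, whence $o(x)$ is not a prime power at all. Being non-maximal also gives $\langle x\rangle\neq G$. Thus $x$ satisfies the hypotheses of Lemma~\ref{oss-crit-2UP}, and since $x$ is critical we have $\widehat{[x]_{\mathtt{N}}}=[x]_{\mathtt{N}}\cupdot\{1\}$, i.e. condition~$(i)$ holds; therefore condition~$(ii)$ holds: for every $y\in G$ with $\langle x\rangle<\langle y\rangle$ there is $z\in G\setminus N[y]$ with $\langle x\rangle<\langle z\rangle$, where $N[\cdot]$ is taken in $\mathcal{P}(G)$. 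Writing $U:=\{w\in G:\langle x\rangle<\langle w\rangle\}$, non-maximality of $x$ means $U\neq\varnothing$, and $(ii)$ says precisely that every $y\in U$ admits $z\in U$ with $\langle y\rangle$ and $\langle z\rangle$ incomparable.

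The main obstacle is that $\mathcal{P}(G)$ is a subgraph of $\mathcal{E}(G)$, so $N[y]\subseteq N_{\mathcal{E}(G)}[y]$ and the target set $G\setminus N_{\mathcal{E}(G)}[y]$ is \emph{smaller} than the set $G\setminus N[y]$ furnished by $(ii)$: incomparability of $\langle y\rangle$ and $\langle z\rangle$ does not by itself force $\langle y,z\rangle$ to be non-cyclic, since both could sit inside a common larger cyclic subgroup. To close this gap I would not choose $y$ arbitrarily but maximally. Using finiteness of $G$ and $U\neq\varnothing$, select $c\in U$ such that $\langle c\rangle$ is maximal in the poset $\{\langle w\rangle:w\in U\}$ ordered by inclusion.

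Finally I would apply $(ii)$ with $y=c$ to obtain $z\in U$ with $\langle c\rangle,\langle z\rangle$ incomparable; in particular $z\notin\langle c\rangle$. I claim $\langle c,z\rangle$ is not cyclic. Indeed, if $\langle c,z\rangle=\langle c'\rangle$ were cyclic, then $c,z\in\langle c'\rangle$ together with $z\notin\langle c\rangle$ would give $\langle c\rangle\subsetneq\langle c'\rangle$; since $\langle x\rangle<\langle c\rangle<\langle c'\rangle$ we would have $c'\in U$ with $\langle c\rangle\subsetneq\langle c'\rangle$, contradicting the maximality of $\langle c\rangle$ in $U$. Hence $\langle c,z\rangle$ is non-cyclic, so in particular $z\neq c$ and $z\in G\setminus N_{\mathcal{E}(G)}[c]$. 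Taking $y:=c$ then yields elements with $\langle x\rangle<\langle y\rangle$, $\langle x\rangle<\langle z\rangle$ and $z\in G\setminus N_{\mathcal{E}(G)}[y]$, as required. The only genuinely delicate point is the maximal choice of $c$; the rest is bookkeeping with the inclusion order of cyclic subgroups.
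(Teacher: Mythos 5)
Your proof is correct. The verification of the hypotheses (via Proposition \ref{ord-pl-cr} and the fact that $\mathcal{S}$ is never critical) and the reduction to condition $(ii)$ of Lemma \ref{oss-crit-2UP} match the paper exactly; the difference lies in how you bridge the gap from non-adjacency in $\mathcal{P}(G)$ to non-adjacency in $\mathcal{E}(G)$. The paper starts from an arbitrary $y_1$ with $\langle x\rangle<\langle y_1\rangle$, and whenever the resulting pair $y_i,z_i$ generates a cyclic group it replaces $y_i$ by a generator $y_{i+1}$ of $\langle y_i,z_i\rangle$, producing a strictly increasing chain of cyclic subgroups that must terminate by finiteness. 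You instead make a single extremal choice at the outset, taking $\langle c\rangle$ maximal in $\{\langle w\rangle : \langle x\rangle<\langle w\rangle\}$, after which the cyclicity of $\langle c,z\rangle$ is immediately ruled out because it would yield a strictly larger member of that poset. Both arguments use finiteness in the same essential way (termination of a chain versus existence of a maximal element), but your one-shot version avoids the iteration and is a little cleaner; the paper's iterative version makes the mechanism of ``climbing'' explicit but proves nothing more. One small point worth making explicit in a final write-up: $z\ne c$ follows from $z\notin N[c]$, so that $z\notin N_{\mathcal{E}(G)}[c]$ really does follow from $\langle c,z\rangle$ being non-cyclic.
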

\begin{proof}
	Since $x$ is plain critical, by Proposition \ref{ord-pl-cr}, we have that $o(x)$ is not a prime power. %\textcolor{blue}{FORSE NON SERVE and $o(x)\neq 1$.}
Moreover, from $[x]_{\mathtt{N}}=[x]_{\diamond}$, $\widehat{[x]_{\mathtt{N}}}=[x]_{\diamond}\cupdot \{1\}$ and $|\widehat{[x]_{\mathtt{N}}}|=p^r$, for some prime $p$ and some integer $r\geq2$, we deduce that $\phi(o(x))=p^r-1$. Finally note that $\langle x \rangle \ne G$ as otherwise we would have  $[x]_{\mathtt{N}}= \mathcal{S}$ against the fact that $\mathcal{S}$ is never critical. As a consequence, the hypothesis of Proposition \ref{crit-2UP_v2} are satisfied.
Since $x$ is not maximal, there exists  $y_1\in G$ such that $\langle y_1 \rangle >\langle x \rangle$. By Proposition \ref{crit-2UP_v2}, there exists $z_1 \in G \setminus N[y_1]$ such that $\langle z_1 \rangle >\langle x \rangle$.  If $\langle y_1, z_1 \rangle$ is not cyclic we define $y\coloneq y_1$ and $z\coloneq z_1$ and we have finished. 

Assume next that $\langle y_1, z_1 \rangle$ is cyclic. Let $y_2\in G$ be such that $\langle y_1, z_1 \rangle=\langle y_2\rangle.$ Note that $\langle y_2\rangle>\langle y_1\rangle$ because, since $z_1 \notin N[y_1]$, then $z_1$ is not a power of $y_1.$ %\textcolor{blue}{FORSE NON SERVE Similarly, note that $\langle y_2\rangle>\langle z_1\rangle$ because, being $z_1 \notin N[y_1]$, $y_1$ is not a power of $z_1.$}
Hence we have produced the chain $\langle y_2\rangle>\langle y_1\rangle>\langle x \rangle.$ We now apply again Proposition \ref{crit-2UP_v2} to $y_2$, finding  $z_2 \in G \setminus N[y_2]$ such that $\langle z_2 \rangle >\langle x \rangle$. If $\langle y_2, z_2 \rangle$ is not cyclic we define $y\coloneq y_2$ and $z\coloneq z_2$, obtaining $z_2\in G \setminus N_{\mathcal{E}(G)}[y_2].$

If instead $\langle y_2, z_2 \rangle$ is cyclic generated by a certain $y_3\in G$, we produce a longer chain $\langle y_3\rangle>\langle y_2\rangle>\langle y_1\rangle>\langle x \rangle.$ 
Since $G$ is finite, such a chain cannot increase arbitrary in length. Hence after $n$ steps, for a suitable $n\in\mathbb{N}$, we produce $y_n\in G$ and $z_n \in G\setminus N_{\mathcal{E}(G)}[y_n]$ with 
$\langle z_n \rangle >\langle x \rangle$ as required.
\end{proof}
\begin{prop}\label{noPartition}
	%Let $G$ be a group  admitting a plain critical element which is not maximal. Then $G$ does not have a partition by cyclic subgroups. 
	Let $G$ be a group. If $G$ admits a partition by cyclic subgroups, then every  plain critical element is maximal.
\end{prop}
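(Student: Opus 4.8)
The plan is to argue by contradiction, supposing that $G$ admits a partition by cyclic subgroups while some plain critical element $x$ fails to be maximal. The entire difficulty of producing the right witnesses has already been isolated in Corollary \ref{corollary_crit-2UP}: applying it to $x$ yields elements $y,z\in G$ with $\langle x\rangle<\langle y\rangle$ and $\langle x\rangle<\langle z\rangle$, and, crucially, $z\in G\setminus N_{\mathcal{E}(G)}[y]$, i.e. $\langle y,z\rangle$ is \emph{not} cyclic. I would also record at the outset that, since $x$ is plain critical, Proposition \ref{ord-pl-cr} forces $o(x)$ not to be a prime power, so in particular $x\neq 1$; the two strict inclusions then also guarantee $y\neq 1$ and $z\neq 1$.

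Next I would bring the partition into play. Write $\mathcal{P}$ for the partition and recall that each component is a cyclic subgroup and that every non-identity element of $G$ lies in exactly one component. Let $U$ be the unique component containing the non-identity element $x$. The component $V$ containing $y$ is a subgroup, hence contains the whole cyclic group $\langle y\rangle$, and therefore contains $x$ because $x\in\langle x\rangle<\langle y\rangle$; by uniqueness $V=U$, so $y\in U$. Running the identical argument with $z$ in place of $y$ gives $z\in U$ as well.

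The contradiction is then immediate: $y$ and $z$ both lie in the cyclic subgroup $U$, so $\langle y,z\rangle\leq U$ is cyclic, contradicting $z\in G\setminus N_{\mathcal{E}(G)}[y]$. Hence no non-maximal plain critical element can exist. I do not expect any genuine obstacle here, since all the graph-theoretic and neighbourhood-closure work has been absorbed into Corollary \ref{corollary_crit-2UP}; the only point requiring care is the bookkeeping by which the shared non-identity element $x$ collapses the components of $y$ and of $z$ into a single component, which is exactly where the defining uniqueness property of a partition is invoked.
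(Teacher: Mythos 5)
Your proof is correct and follows essentially the same route as the paper's: both argue by contradiction, invoke Corollary \ref{corollary_crit-2UP} to produce $y,z$ properly above $\langle x\rangle$ with $\langle y,z\rangle$ non-cyclic, and then use the uniqueness of the partition component containing the non-identity element $x$ to force $y$ and $z$ into a single cyclic component. No issues.
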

\begin{proof}
	Let $\mathcal{P}$ be  a partition of $G$ whose components are cyclic subgroups of $G$. Assume, by contradiction, that there exist $x\in G$  plain  critical and not maximal. Since $x$ is critical, we have that $x\neq 1.$ By Corollary \ref{corollary_crit-2UP}, there exist $y,z\in G$ such that  $\langle x \rangle < \langle y\rangle$, $\langle x \rangle < \langle z \rangle$ and $\langle y,z\rangle$ is not cyclic. 

Let $C\in \mathcal P$ be such that $y\in C$ and $C' \in \mathcal{P}$ be such that $z \in C'$.
Then both $C$ and $C'$ contains $x\neq 1$ and thus $C=C'$. As a consequence, $C$ is  a cyclic subgroup of $G$ containing both $y$ and $z$, which implies $\langle y,z\rangle$ cyclic, a contradiction.
\end{proof}

We emphasize that if a group admits a partition by cyclic subgroups, plain critical elements may or not exist.
%For instance, $S_3$ admits the partition with cyclic components $\mathcal{P}=\{A_3\cong C_3, \langle (12)\rangle, \langle (13)\rangle, \langle (23)\rangle\}.$
%However, no element of $S_3$ is critical.\\

Consider the dihedral group $\mathrm{D}_{2n}$, $n\geq 2$. As well-known,  $\mathrm{D}_{2n}$ admits a partition by cyclic subgroups. However, $\mathrm{D}_{2n}$ contains plain critical elements if and only if $n$ is not a prime power and $\phi(n)=p^r-1$ holds, for some prime $p$ and some integer $r\geq2$. Indeed, assume that $\mathrm{D}_{2n}$ contains a plain critical element $x$.
By Proposition \ref{ord-pl-cr}, the involutions are never plain critical. Then $n\geq 3$ and $x$ belongs to the normal subgroup of size $n$ in $\mathrm{D}_{2n}$.
Since $x$ is maximal, we then have $o(x)=n$ and thus, by Proposition \ref{ord-pl-cr}, $n$ is not a prime power. Moreover, we have $|\widehat{[x]_{\mathtt{N}}}|=p^r$, for some prime $p$ and some integer $r\geq2$. Since we also have  $\widehat{[x]_{\mathtt{N}}}=[x]_{\diamond}\cupdot \{1\}$, we get 
$\phi(n)=p^r-1$.
Conversely, assume that $n$ satisfies those arithmetic properties. The elements of order $n$ in $\mathrm{D}_{2n}$ are maximal 
and hence, by Proposition \ref{crit-2UP_v2}, they are plain critical. 

\section{Critical groups}

We now define three typologies of groups.
		\begin{definition}
	\label{DefCriticalGroups}
			\rm	Let $G\neq 1$ be a group. We say that $ G$ is 
			\begin{itemize}
			\item[1.] \emph{critical} if every $x\in G\setminus\{1\}$ is critical; 
			\item[2.] \emph{plain} [\emph{compound}] if every $x\in G\setminus\{1\}$ is plain [compound];
\end{itemize}
\end{definition}

A group that is both plain [compound] and critical will be called plain  critical [compound critical]. Note that, by Proposition \ref{crit_S_1}, if $G$ is critical then $\mathcal{S}=\{1\}$.
					
In principle, it is not clear if critical groups, plain groups and compound groups do exist. In fact, the existence of plain groups is easily established.
\begin{prop}\label{abelian-plain}
	Every non-cyclic abelian group is plain.
\end{prop}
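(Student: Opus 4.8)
The plan is to show that no non-trivial $\mathtt{N}$-class of $G$ can be of compound type. Since $G$ is non-cyclic abelian, it is neither cyclic nor a generalized quaternion group, so by Example \ref{S-comp} we have $\mathcal{S}=\{1\}$; in particular every class $C\neq\mathcal{S}$ is non-trivial and Proposition \ref{propC_y} applies to it. An element $x\in G\setminus\{1\}$ whose order is not a prime power satisfies $G\neq\langle x\rangle$ (otherwise $G$ would be cyclic), hence is plain by Lemma \ref{oss-crit-2UP}. Thus it suffices to treat elements of prime power order, and by the structure theorem for compound classes in Proposition \ref{propC_y} it is enough to prove: if $y\in G$ has order $p^r$ with $r\geq 2$, then $y$ and $z:=y^p$ are not $\mathtt{N}$-equivalent. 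Indeed, a compound class would have such a root $y$ and, since $p^{r-1}$ lies in the admissible range of orders, would contain $z=y^p$ (of order $p^{r-1}$), forcing $y\,\mathtt{N}\,z$.

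Next I would record that $N[y]\subseteq N[z]$ always holds: if $w\in N[y]$ then $\langle y\rangle\leq\langle w\rangle$ or $\langle w\rangle\leq\langle y\rangle$, and in either case $\langle z\rangle$ is comparable with $\langle w\rangle$ (using $\langle z\rangle\leq\langle y\rangle$ and the fact that the subgroups of the cyclic group $\langle y\rangle$ form a chain). Hence the whole point is to exhibit a vertex $w\in N[z]\setminus N[y]$, that is, a cyclic subgroup $\langle w\rangle$ containing $z$ but not $y$, with $w\notin\langle y\rangle$. I would construct $w$ according to the (unique, normal) Sylow $p$-subgroup $P$ of $G$. If $P$ is non-cyclic, then $\Omega_1(P)$ has order at least $p^2$, so there is $u$ of order $p$ with $u\notin\langle y\rangle$, and I set $w:=yu$. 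If $P$ is cyclic, then $G$ is not a $p$-group (else $G=P$ would be cyclic), so some prime $q\neq p$ divides $|G|$; picking $u$ of order $q$ I set $w:=zu=y^pu$.

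Finally I would verify the three required properties of $w$ in each case, the computations taking place inside $\langle y\rangle\times\langle u\rangle$ since $\langle y\rangle\cap\langle u\rangle=1$. In the first case $w=yu$ has order $p^r$ with $w^p=y^p=z$, so $z\in\langle w\rangle$; while $y\in\langle w\rangle$, say $y=w^k$, would force $u^k=1$ and $y^k=y$, i.e. $p\mid k$ and $k\equiv 1\pmod{p^r}$ simultaneously, which is impossible; and $w\notin\langle y\rangle$ because $u=wy^{-1}$. In the second case $\langle w\rangle\cong C_{p^{r-1}q}$ is cyclic and contains $z$, but $y\notin\langle w\rangle$ since $p^r\nmid p^{r-1}q$, and $w\notin\langle y\rangle$ since $q\mid o(w)$. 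In both cases $w\in N[z]\setminus N[y]$, contradicting $y\,\mathtt{N}\,z$. I expect the main obstacle to be exactly this construction: a single uniform choice of $w$ does not work, because when $P$ is cyclic one is forced to leave the prime $p$ (using $q$), whereas when $P$ is non-cyclic the coprimality trick is unavailable and one must instead enlarge $\langle z\rangle$ to a cyclic $p$-group of order $p^r$ distinct from $\langle y\rangle$; keeping precise track of which elements lie in $\langle w\rangle$ is where the care is needed.
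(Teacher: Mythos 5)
Your argument is correct, but it takes a genuinely different route from the paper: the paper's entire proof is a one-line citation of \cite[Proposition 2]{Cameron_1}, which already asserts that in a non-cyclic abelian group every $\mathtt{N}$-class is a single $\diamond$-class. What you have done, in effect, is reprove that cited result from scratch. Your reduction is sound: $\mathcal{S}=\{1\}$ since $G$ is neither cyclic nor generalized quaternion, Proposition \ref{propC_y} shows any compound class $C\neq\mathcal{S}$ has a root $y$ of order $p^r$ ($r\geq 2$) with $y^p\in C$, so it suffices to separate $N[y]$ from $N[y^p]$; and your two-case construction of a witness $w\in N[y^p]\setminus N[y]$ (via $w=yu$ with $u$ of order $p$ outside $\langle y\rangle$ when the Sylow $p$-subgroup is non-cyclic, and $w=y^pu$ with $o(u)=q\neq p$ otherwise) checks out, including the verification that the two cases are exhaustive because a cyclic Sylow $p$-subgroup equal to $G$ would make $G$ cyclic. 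The trade-off is the usual one: the paper's proof buys brevity and defers the work to the literature, while yours buys self-containedness and makes visible exactly where non-cyclicity and commutativity are used (commutativity enters through the uniqueness and normality of the Sylow $p$-subgroup and the direct-product computation of orders, non-cyclicity through the existence of the element $u$). The observation $N[y]\subseteq N[y^p]$, while not strictly needed once you exhibit $w$, is a nice structural remark and is also correct.
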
 
\begin{proof}
It  follows immediately by the fact that in a non-cyclic abelian group every $\mathtt{N}$-class is a $\diamond$-class as was shown in \cite[Proposition 2]{Cameron_1}. 
\end{proof}

\subsection{Existence of compound critical groups and partitions}
In order to prove that critical groups and compound groups  exist we are going to show that compound critical groups do exist. 

\begin{prop}\label{applicazione_critical} Let $G$ be a group 
	admitting a non-trivial partition into cyclic subgroups of orders  a prime power with exponent at least $2$. Then $G$ is compound critical.
\end{prop}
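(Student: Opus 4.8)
The plan is to prove the statement at the level of individual elements: I will show that for every $x\in G\setminus\{1\}$ the $\mathtt{N}$-class $[x]_{\mathtt{N}}$ is a compound critical class, which by Definition \ref{DefCriticalGroups} is precisely the assertion that $G$ is compound critical. Fix such an $x$ and let $C$ be the unique component of the given partition $\mathcal{P}$ containing $x$. By hypothesis $C=\langle c\rangle$ is cyclic of order $p^k$ with $p$ prime and $k\geq 2$. The core of the argument is to compute the closed neighbourhood of $x$ in $\mathcal{P}(G)$ and to prove the clean identity $N[x]=C$.

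First I would establish $C\subseteq N[x]$: since $C$ is a cyclic $p$-group its subgroups form a chain, so every element of $C$ is comparable with $x$ under inclusion of generated subgroups, and thus joined to $x$ in the power graph. The reverse inclusion $N[x]\subseteq C$ is the key step and the only place where the partition hypothesis genuinely enters. Let $y\in N[x]$, so that $\langle y\rangle\leq\langle x\rangle$ or $\langle x\rangle\leq\langle y\rangle$. In the first case $y\in\langle x\rangle\leq C$, hence $y\in C$. In the second case $x\in\langle y\rangle$; letting $C'$ be the component of $\mathcal{P}$ containing $y$, we have $\langle y\rangle\leq C'$ and therefore $x\in C'$. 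Since $x\neq 1$ lies in a unique component, this forces $C=C'$ and so $y\in C$. Hence $N[x]=C$.

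Once $N[x]=C$ holds for every $x\in C\setminus\{1\}$, all such elements share the same closed neighbourhood and so lie in a single $\mathtt{N}$-class; conversely any $w$ with $N[w]=C$ satisfies $w\in N[w]=C$, and $w\neq 1$ because $N[1]=G\neq C$ (here the non-triviality of $\mathcal{P}$, which guarantees $C\neq G$, is used). Therefore $[x]_{\mathtt{N}}=C\setminus\{1\}$. Since $C$ is cyclic of order $p^k$ with $k\geq 2$, this set contains elements of each order $p,p^2,\dots,p^k$ and so is a union of $k\geq 2$ distinct $\diamond$-classes; by Proposition \ref{propC_y} it is consequently a class of compound type, and the explicit description $C\setminus\{1\}=\{z\in\langle c\rangle: p^{0+1}\leq o(z)\leq p^k\}$ identifies its parameters as $(p,k,0)$. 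By Proposition \ref{par-crit-comp} a compound class with parameters $(p,r,0)$ is critical, so $[x]_{\mathtt{N}}$ is compound critical. As $x\in G\setminus\{1\}$ was arbitrary, $G$ is compound critical.

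I expect the main, and essentially only, obstacle to be the inclusion $N[x]\subseteq C$ in the case $\langle x\rangle\leq\langle y\rangle$: this is exactly where the defining feature of a partition, namely that distinct components intersect only in $1$, must be converted into a statement about the power graph that drives the neighbour $y$ back into $C$. Everything downstream is a direct application of Propositions \ref{propC_y} and \ref{par-crit-comp}, exploiting that the components are cyclic of prime-power order with exponent at least $2$.
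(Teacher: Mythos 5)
Your proof is correct and follows essentially the same route as the paper: the decisive step in both is showing that $N[x]$ coincides with the partition component $U$ containing $x$ (using that distinct components meet only in $1$), from which $[x]_{\mathtt{N}}=U\setminus\{1\}$ and the compound nature of the class follow. The only immaterial difference is at the end, where the paper verifies criticality directly by computing $\widehat{[x]_{\mathtt{N}}}=U$ and checking $|U|=p^r$, whereas you invoke the parameter characterization of Proposition \ref{par-crit-comp}.
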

\begin{proof} 
	Let $\mathcal{P}$ be a partition of $G$ with cyclic components. Then, for every $U\in \mathcal{P}$, we have $1<U<G.$ Consider the set 
	$$\mathcal{C}:=\{C
	: C \hbox{ is a } \mathtt{N}\hbox{-class of } G\}\setminus \{\mathcal{S}\}$$
	of the non-trivial $\mathtt{N}$-classes of $G$,
	and the set
	$$\hat{\mathcal{C}}:=\{\hat{C}: C \in \mathcal{C}\}$$
	of their neighbourhood closures.
	We claim that 
	\begin{equation}\label{prima1}
		\hat{\mathcal{C}}=\mathcal{P}
	\end{equation}
	and that 
	\begin{equation}\label{seconda2}
		\mathcal{C}=\{U\setminus\{1\}: U\in \mathcal{P}\}.
	\end{equation}
	
	We first show that if $x\in G\setminus\{1\}$, then $N[x]= U$, where $U$ is the unique component in $\mathcal{P}$ such that $x\in U$.
	% Let $C\in \mathcal{C}$. Then $C=[x]_ \mathtt{N}$ for some $x\in G\setminus\{1\}$. Then there exists a unique $U\in \mathcal{F}$ such that $x\in U$. We show that $C=U\setminus\{1\}$ and $\hat{C}=U.$ 
	Let $x\in G\setminus\{1\}$ and $U$ be the unique component in $\mathcal{P}$ such that $x\in U$.
	Since $U=\langle u\rangle$ is a cyclic group of prime power order, we immediately have that $N[x]\supseteq U$. We show that $N[x]\subseteq U$.
	Let $y\in N[x]$. If $y$ is a power of $x$, then $y\in U.$ Assume next that $x=y^m$, for some positive integer $m$. Suppose, by contradiction, that $y\notin U$. Then $y\in V$ for a  component $V\in\mathcal{P}\setminus\{U\}$. It follows that $1\neq x\in U\cap V$, against the definition of partition.
	
	%As a consequence, we immediately have that $\mathcal{S}=\{1\}$. \textcolor{red}{notare questa cosa forse quì non serve...}
	Moreover, we deduce that $N[x]=N[y]$ for $x, y\in G\setminus\{1\}$ if and only if $x$ and $y$ belong to the same component of the partition $\mathcal{P}.$ 
	Thus, for every $x\in G\setminus\{1\}$, we have $[x]_{ \mathtt{N}}=U\setminus\{1\}$, where $U$ is the unique component in $\mathcal{P}$ such that $x\in U$. On the other hand, given $U\in\mathcal{P}$ and taken $x\in U\setminus\{1\}$, we obviously have $[x]_{ \mathtt{N}}=U\setminus\{1\}$. Hence we have proved \eqref{seconda2}.
	
	In order to show \eqref{prima1}, it remains to show that, for every $U\in \mathcal{P}$ and $x\in U\setminus\{1\},$ we have $\widehat{[x]_{ \mathtt{N}}}=U.$   
	Indeed, by \cite[Proposition 2]{BP22}, we have 
	$$(U\setminus\{1\})\cup \{1\}\subseteq \widehat{[x]_{ \mathtt{N}}}\subseteq N[x]=U,$$
	so that $\widehat{[x]_{ \mathtt{N}}}=U.$ 
	
	Note that every $x\in G\setminus\{1\}$ is critical, because we know that $\widehat{[x]_{ \mathtt{N}}}=[x]_{ \mathtt{N}}\cupdot\{1\}=U$ and $|U|=p^r$, for some prime $p$ and some integer $r\geq 2$.
	Moreover, since $U$ is a cyclic subgroup, in $[x]_{\mathtt{N}}= U \setminus \{1\}$ there are elements of order $p$ and of order $p^r$ and hence $x$ is compound. Therefore $G$ is compound critical.
\end{proof}

We introduce now some notation and recall some basic facts. Let $m\in \mathbb{N}$ and consider the ring $\mathbb{Z}_m=\mathbb{Z}/(m)$ of the integers modulo $m$. Let $U_m:=U(\mathbb{Z}_m)$ be the group of its units. Then $U_m$ is abelian of order $\phi(m)$. For $x\in \mathbb{Z}$, we have that $x+(m)\in U_m$ if and only if $x$ is coprime with $m$.  We denote by $|x|_{\mod m}$ the order of $x+(m)\in U_m$ in $U_m.$
If $p$ is a prime and $n\in \mathbb{N}$, we have  $U_{p^n}\cong C_{p^{n-1}(p-1)}$. The map $\varphi: U_{p^n}\rightarrow U_p$, given by $\varphi(x+(p^n))=x+(p)$ for $x\in \mathbb{Z}$ coprime with $p$,  is well defined and is a surjective group homomorphism. It follows that  $o(x+(p))$ divides $o(x+(p^n))$, that is 
$|x|_{\mod p}$ divides $ |x|_{\mod p^n}$. For instance $|2|_{\mod 3}=2$ divides $|2|_{\mod 9}=6.$

\begin{prop}\label{Frobenius-espliciti}  Let $C_{p^a}=\langle x\rangle$ and $C_{q^b}=\langle y\rangle$ with $p\neq q$ primes and $a,b\geq 1$ integers. Then the following facts hold:
	\begin{itemize}
		\item[$(i)$] the semidirect product
		$C_{p^a}\rtimes C_{q^b}$ defined by the condition $x^y=x^r$, with $2\leq r<p^a$ such that $p\nmid r$, 
		is a Frobenius group with kernel $C_{p^a}$ if and only if $q^b=|r|_{\mod p};$
		\item[$(ii)$] the Frobenius groups in $(i)$ exist if and only if $q^b\mid p-1$.
	\end{itemize}
\end{prop}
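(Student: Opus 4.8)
The plan is to reduce both statements to the standard characterization of Frobenius groups via fixed-point-free actions, and then to elementary arithmetic in the unit groups $U_p$ and $U_{p^a}$. Throughout, set $K=\langle x\rangle\cong C_{p^a}$ and $H=\langle y\rangle\cong C_{q^b}$, so that $G=K\rtimes H$ with $K\trianglelefteq G$; note that for the semidirect product to be well defined one needs $r^{q^b}\equiv 1 \pmod{p^a}$, i.e.\ $|r|_{\mod p^a}$ divides $q^b$, which I treat as part of the meaning of ``defined by $x^y=x^r$''. The criterion I would invoke is that $G$ is a Frobenius group with kernel $K$ if and only if $H$ acts fixed-point-freely on $K$, that is, no nontrivial element of $H$ fixes a nontrivial element of $K$.

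For $(i)$, the first step is to translate the fixed-point-free condition into arithmetic. Since $y^{i}$ acts on $K$ by $x\mapsto x^{r^{i}}$, a power $x^{j}$ is fixed by $y^{i}$ precisely when $j(r^{i}-1)\equiv 0\pmod{p^a}$; as $p$ is prime, a nontrivial such $x^{j}$ exists if and only if $p\mid r^{i}-1$, i.e.\ if and only if $|r|_{\mod p}$ divides $i$. Hence $H$ acts fixed-point-freely if and only if $|r|_{\mod p}\nmid i$ for every $i\in\{1,\dots,q^{b}-1\}$. Writing $d:=|r|_{\mod p}$ and using that $d$ divides $|r|_{\mod p^a}$, which in turn divides $q^{b}$, the only way no multiple of $d$ lands in $\{1,\dots,q^{b}-1\}$ is that $d$ itself, the smallest positive multiple, already equals $q^{b}$. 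This yields the equivalence in $(i)$.

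For $(ii)$, one direction is immediate from $(i)$: if such a Frobenius group exists then there is an $r$ with $|r|_{\mod p}=q^{b}$, and since $|r|_{\mod p}$ divides $|U_p|=p-1$ we get $q^{b}\mid p-1$. For the converse I would argue existence explicitly. If $q^{b}\mid p-1$ then $p-1\ge q^{b}\ge 2$, so $p$ is odd and $U_{p^a}$ is cyclic of order $p^{a-1}(p-1)$; as $q^{b}\mid p-1$ divides this order, $U_{p^a}$ contains an element $\bar r$ of order exactly $q^{b}$. Choosing the representative $r\in\{1,\dots,p^{a}-1\}$ gives $p\nmid r$ and $r\neq 1$ (its order exceeds $1$), hence $2\le r<p^{a}$, while $|r|_{\mod p^a}=q^{b}$ makes the semidirect product well defined. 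Finally, because $q\ne p$ the reduction homomorphism $U_{p^a}\to U_p$ has $p$-power kernel, so it is injective on the $q$-element $\bar r$; thus $|r|_{\mod p}=q^{b}$ and part $(i)$ shows $G$ is Frobenius with kernel $C_{p^a}$.

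The routine part is the order arithmetic; the two points that need genuine care are, first, that the Frobenius criterion must be tested against every nontrivial power $y^{i}$ and not merely against $y$ itself (this is exactly what forces $d=q^{b}$ rather than the weaker $d\mid q^{b}$), and second, the interplay between $|r|_{\mod p}$ and $|r|_{\mod p^a}$: one must simultaneously keep the product well defined and guarantee that reduction modulo $p$ preserves the order $q^{b}$, which is precisely where the coprimality $q\ne p$ enters. I expect this latter bookkeeping to be the main obstacle, after which the Frobenius equivalence follows cleanly.
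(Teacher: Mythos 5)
Your proof is correct and follows essentially the same route as the paper: both parts reduce to the fixed-point-free action criterion for Frobenius groups and to order computations in the cyclic unit groups $U_p$ and $U_{p^a}$. The only (welcome) difference is that in part $(ii)$ you explicitly produce an element of order exactly $q^b$ in $U_{p^a}$ so that the semidirect product is well defined before reducing modulo $p$, a point the paper's proof leaves implicit.
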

\begin{proof} $(i)$ The condition that guarantees that $G=C_{p^a}\rtimes C_{q^b}$ is a Frobenius group with kernel $C_{p^a}$ is the following
	\begin{equation}\label{Frob}
		(x^k)^{y^s}=x^k\quad \hbox{for some}\quad 1\leq k\leq p^a,\ 1\leq s\leq q^b\ \Longrightarrow k=p^a \quad \hbox{or} \quad s=q^b.
	\end{equation}
	Assume that $G$ is a Frobenius group. Then, by \eqref{Frob}, for $s<q^b$ we have $(x^{p^{a-1}})^{r^s}\neq x^{p^{a-1}},$ that is, $x^{p^{a-1}(r^s-1)}\neq 1$ which implies $p\nmid 
	r^s-1$ so that $r^s\not\equiv 1 (\mbox{mod }p)$. On the other hand, we have 
	$$x^{r^{q^b}}=x^{y^{q^b}}=x$$ so that $p\mid r^{q^b}-1$, hence $r^{q^b} \equiv 1 \mbox{ mod }p$ and thus $q^b=|r|_{\mod p}.$ 
	Conversely, assume that $q^b=|r|_{\mod p}.$ We show that \eqref{Frob} holds. Equality $(x^k)^{y^s}=x^k$ implies $x^{kr^s-k}=1$ and hence $p^a\mid k(r^s-1).$ Assume $s<q^b$. Then $p\nmid r^s-1$ and we obtain $p^a\mid k.$ Then $p^a\leq k\leq p^a,$ so that $k=p^a$.
	\medskip
	
	$(ii)$ In order to have Frobenius groups in $(i)$ we just need to guarantee the existence of $r\in \mathbb{N}$ with $p\nmid r$ such that the order of $r+(p)$ in $U_p$ is $q^b$. Since $U_p\cong C_{p-1}$ is cyclic this happens if and only if $q^b\mid p-1$.
\end{proof}

We can now exhibit an infinite family of explicit examples of compound critical groups.

\begin{example}\label{Esempio_gruppo_critico}
	{\rm Let $a$ be an integer with $a\geq 2$ and $C_{5^a}=\langle x\rangle$.  Let  $C_4=\langle y\rangle$ denote the unique subgroup of $Aut(C_{5^a})$ of order $4$, and let $G:= C_{5^a}\rtimes C_{4}$ be defined by the condition $x^y=x^7$. Note that $4=|7|_{\mod 5}$. Then, by Proposition \ref{Frobenius-espliciti}, $G$ is a Frobenius group. As a consequence, $G$ admits a non-trivial partition into cyclic subgroups having orders $5^a$ and $2^2$.
Then, by Proposition \ref{applicazione_critical}, $G$ is compound critical.  }
\end{example}

\subsection{The nature of critical groups and their characterization}
We want to study in more detail the critical groups at the scope of their characterization.
 An easy result is the following.

\begin{prop}
	\label{NoPlainCritical}
There exists no plain critical group.
\end{prop}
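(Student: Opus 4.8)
The goal is to show that a plain critical group cannot exist. Recall that a group $G$ is plain critical if every non-identity element $x$ is both plain (its $\mathtt{N}$-class is a single $\diamond$-class) and critical (so that $\widehat{[x]_{\mathtt{N}}}=[x]_{\mathtt{N}}\cupdot\{1\}$ with $|\widehat{[x]_{\mathtt{N}}}|=p^r$ for some prime $p$ and $r\geq 2$). The plan is to derive a contradiction by examining what criticality forces on the orders of elements, and then showing these forced orders are incompatible with the requirement that \emph{every} non-identity element be plain.

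First I would note that, by Proposition \ref{crit_S_1}, a critical group has $\mathcal{S}=\{1\}$, so in particular $G$ is not cyclic and has no non-trivial star vertices. Next, the key step is to pin down element orders via Proposition \ref{ord-pl-cr}: since every non-identity element is assumed plain critical, no non-identity element can be compound critical, and hence by that proposition \emph{no} non-identity element can have order a proper prime power. Combined with the fact that critical elements are never the identity and never have order $1$, this means every non-identity element of $G$ must have order that is \emph{not} a prime power (in particular every such order is divisible by at least two distinct primes). This is already very restrictive.

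The heart of the argument is then to exhibit an element whose existence violates this conclusion. I would take any prime $p$ dividing $|G|$ (which exists since $G\neq 1$) and invoke Cauchy's theorem to produce an element $x$ of order $p$. But $p$ is a prime power, so $x$ is a non-identity element whose order \emph{is} a prime power, contradicting the conclusion of the previous paragraph that every non-identity element has order divisible by at least two primes. This immediate contradiction shows no plain critical group can exist. I expect the only subtlety — the main obstacle — is making sure the chain of implications through Proposition \ref{ord-pl-cr} is applied correctly: that proposition is stated for an element $x$ that is \emph{already known to be critical}, and it asserts $x$ is compound iff $o(x)$ is a proper prime power; so for a plain (hence non-compound) critical element the order cannot be a proper prime power, which is exactly what forces the contradiction with the Cauchy element of prime order.

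Thus the proof reduces to three clean observations: (1) in a plain critical group every non-identity element is plain critical, so by Proposition \ref{ord-pl-cr} its order is not a proper prime power; (2) such an order is also $\neq 1$; hence every non-identity order is divisible by at least two distinct primes; (3) Cauchy's theorem supplies an element of prime order $p$, whose order \emph{is} a prime power, a contradiction. I would present this as a short direct proof by contradiction, assuming a plain critical group $G$ exists and extracting the Cauchy element to close the argument.
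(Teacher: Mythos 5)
Your proof is correct and follows essentially the same route as the paper's: both assume a plain critical group exists, extract an element of prime order $p$ via Cauchy's theorem, and derive a contradiction from Proposition \ref{ord-pl-cr}, since a plain critical element cannot have order a proper prime power. Your additional care about the distinction between ``prime power'' and ``proper prime power'' is accurate but does not change the argument.
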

\begin{proof} Assume, by contradiction, that there exists a plain critical group $G$. Since $G\neq 1$, there exists $p\mid |G|$ and
$y\in G$ an element of order $p$. Since $G$ is  plain critical, we have that $y$ is plain. But the fact that $y$ has order a proper prime power goes against Proposition \ref{ord-pl-cr}.
\end{proof}
As a consequence of Proposition \ref{NoPlainCritical}, a critical group must always contain at least one compound element.
%Until now there are no results that forbid the existence of a critical group with a plain element. However we now show that every critical group is also compound.
We  now show that, surprisingly, in a critical group every element different from $1$ must be compound.
% admit plain elements. In other words, every critical group is compound.
\begin{theorem}\label{crit-impl-comp}
	If $G$ is a critical group, then $G$ is compound.
\end{theorem}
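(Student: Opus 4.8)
The plan is to argue by contradiction: suppose $G$ is critical but not compound. By Definition \ref{DefCriticalGroups}, a critical group has every non-identity element critical, and each such element is either plain critical or compound critical. If $G$ were not compound, then some element would fail to be compound, hence would be plain; being critical, it would be plain critical. So the negation gives us a plain critical element $x\in G$. The strategy is to extract strong structural information from the existence of a single plain critical element and then show it forces a non-critical element to appear somewhere in $G$, contradicting criticality.

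First I would record what a plain critical element $x$ looks like. By Proposition \ref{ord-pl-cr}, since $x$ is plain critical, $o(x)$ is \emph{not} a proper prime power; combined with $x\neq 1$ this means $o(x)$ is divisible by at least two distinct primes, and moreover (as in the proof of Corollary \ref{corollary_crit-2UP}) we have $\phi(o(x))=p^r-1$ for some prime $p$ and integer $r\geq 2$, with $\widehat{[x]_{\mathtt N}}=[x]_{\diamond}\cupdot\{1\}$. In particular $n:=o(x)$ is a genuinely composite order. Now I would look at the proper nontrivial divisors of $n$: pick a prime $\ell\mid n$ and consider the element $x^{n/\ell}$ of order $\ell$. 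This element has prime (hence proper prime power) order, so by Proposition \ref{ord-pl-cr} it cannot be plain critical; since $G$ is critical it must be compound critical. By Proposition \ref{propC_y}, a compound critical element's class sits inside a single cyclic group of prime power order $p^r$ with parameters $(p,r,0)$, so $x^{n/\ell}$ lies in a cyclic $\ell$-group. This already suggests a tension: $x$ itself has order divisible by several primes, but each of its prime-power ``pieces'' must be wedged into compound classes.

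The cleanest route, which I would pursue next, is to exploit the neighbourhood closure of a compound element together with the presence of the non-prime-power element $x$. Take any prime $\ell\mid o(x)$ and let $w:=x^{n/\ell^{v}}$ be the $\ell$-part of $x$ (where $\ell^v\|n$), so $\langle w\rangle$ is the Sylow-type cyclic factor of $\langle x\rangle$ at $\ell$. Since $o(w)=\ell^v$ is a proper prime power (as $v\geq 1$ and, if $v=1$, $w$ has prime order), $w$ is compound critical, so by Proposition \ref{propC_y} its $\mathtt N$-class has parameters $(\ell,v,0)$ and equals $\{z\in\langle w\rangle:1\leq o(z)\leq \ell^{v}\}\setminus\{1\}$, forcing $[w]_{\mathtt N}=\langle w\rangle\setminus\{1\}$ with $\widehat{[w]_{\mathtt N}}=\langle w\rangle$ of size $\ell^{v}$. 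But $x\in N[w]$ would be needed to stay inside the closure $\widehat{[w]_{\mathtt N}}=\langle w\rangle$, whereas $x\notin\langle w\rangle$ because $o(x)$ is divisible by a prime other than $\ell$. Comparing neighbourhoods via $\widehat{[w]_{\mathtt N}}=\bigcap_{z\in N[w]}N[z]$ (Proposition 2 of \cite{BP22}) gives the contradiction: on the one hand $x$ and $w$ are power-graph adjacent so $x\in N[w]$, on the other hand the compound-critical structure of $w$ confines its class-closure to $\langle w\rangle$, which cannot contain $x$.

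The main obstacle, and the step I would spend the most care on, is making the last confrontation airtight: I must verify cleanly that $x\in N[w]$ (which is immediate since $w=x^{n/\ell^v}\in\langle x\rangle$ gives adjacency) while simultaneously $x\notin \widehat{[w]_{\mathtt N}}=\langle w\rangle$, and reconcile this with the exact characterization of the closure for a compound critical class. The delicate point is that $N[w]$ itself may be larger than $\langle w\rangle$ (indeed $x\in N[w]\setminus\langle w\rangle$), so the contradiction must be drawn not from $N[w]$ but from the \emph{closure} $\widehat{[w]_{\mathtt N}}$; I would therefore argue directly from Proposition \ref{par-crit-comp} and the size equation $|\widehat{[w]_{\mathtt N}}|=\ell^v=|\langle w\rangle|$ together with $\langle w\rangle\subseteq\widehat{[w]_{\mathtt N}}$ to conclude $\widehat{[w]_{\mathtt N}}=\langle w\rangle$ exactly, and then use that $x$ lies in $N[w]$ but that $w$'s own class closure, being the intersection over $N[w]$, still cannot absorb an element generating a strictly larger cyclic group of non-prime-power order. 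Handling the bookkeeping so that no case (e.g.\ $x$ maximal versus non-maximal, or $v=1$ versus $v\geq 2$) slips through is where the real work lies.
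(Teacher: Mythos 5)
There is a genuine gap at the final step. The first part of your argument is sound and essentially reproduces the Claim in the paper's proof: for a plain critical $x$ and a prime $\ell\mid o(x)$, the element of order $\ell$ in $\langle x\rangle$ is compound critical with a root $y_0$ of order $\ell^r$, $r\geq 2$, and since $x\in N[y_0]$ with $o(x)$ not a prime power one gets $\langle y_0\rangle\leq\langle x\rangle$, hence $\ell^2\mid o(x)$ (and, with a little more care than you give, that $\langle y_0\rangle$ is exactly the $\ell$-part $\langle w\rangle$, so $v\geq 2$ and $\widehat{[w]_{\mathtt N}}=\langle w\rangle$). But the contradiction you then draw --- ``$x\in N[w]$ while the closure $\widehat{[w]_{\mathtt N}}=\langle w\rangle$ cannot contain $x$'' --- is not a contradiction. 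The closure $\widehat{[w]_{\mathtt N}}=\bigcap_{z\in N[w]}N[z]$ is in general a \emph{proper} subset of $N[w]$, and nothing forces an element of $N[w]$ to lie in it: $x\in N[w]\setminus\widehat{[w]_{\mathtt N}}$ merely says that some vertex of $N[w]$ is not adjacent to $x$, which is perfectly consistent with everything you have established (indeed $\langle w\rangle\subseteq N[x]$ holds, so the containment $\widehat{[w]_{\mathtt N}}\subseteq N[x]$ coming from $x\in N[w]$ is satisfied). You acknowledge the danger in your last paragraph but never replace the faulty step with a valid one.

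What is missing is a second application of the squared-divisibility fact to a \emph{different} plain element. The paper takes $z\in\langle x\rangle$ of order $pq$ for two distinct primes $p,q$ dividing $o(x)$ (such primes exist because $o(x)$ is not a prime power): since $G$ is critical and $pq$ is not a prime power, Proposition \ref{ord-pl-cr} forces $z$ to be plain, and then the Claim applied to $z$ demands $p^2\mid pq$, which is absurd. Alternatively, you could finish within your own setup by noting that the element $u$ of order $\ell m$ in $\langle x\rangle$ ($m$ another prime divisor of $o(x)$) lies in $N[z_\ell]=N[w]$, where $z_\ell$ is the element of order $\ell$, because $[w]_{\mathtt N}=\langle w\rangle\setminus\{1\}$ identifies the neighbourhoods of $z_\ell$ and $w$; yet $u$ and $w$ are not adjacent since neither of $\ell m$ and $\ell^v$ (with $v\geq 2$) divides the other. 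Either way, an additional idea beyond what you wrote is required; as it stands the proof does not close.
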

\begin{proof} Let $G$ be a critical group. 
	We prove the following claim.
	\begin{claim}
		Let $y\in G$ be plain and $p$ a prime such that $p\mid o(y)$. Then $p^2\mid o(y)$.
	\end{claim}
	Let $z\in \langle y \rangle$ be an element of order $p$. Since $G$ is critical, by Proposition \ref{ord-pl-cr}, $z$
	is compound critical and $[z]_{\mathtt{N}}\neq \mathcal{S}=\{1\}.$
	Thus, by Proposition \ref{par-crit-comp},
	$[z]_{\mathtt{N}}$ has parameters $(p,r,0)$, for a certain integer $r\geq2$.
	Let $x$ be a root of $[z]_{\mathtt{N}}$.
	Then we have $y\in N[z]=N[x]$, and hence $o(x)\mid o(y)$ or $o(y)\mid o(x)$.
	But $o(y) \mid o(x)$ cannot hold because $o(x)=p^r$ and, by Proposition \ref{ord-pl-cr},  $o(y)$ is not a prime power.
	Thus we have $p^r\mid o(y)$, and hence $p^2\mid o(y)$.
	\medskip
	
	Now suppose, by contradiction, that there exists a critical group  $G$ that is not compound.
	Then there exists at least a plain $y\in G\setminus \{1\}$.
	By Proposition \ref{ord-pl-cr} and  the claim above, we have that $o(y)=p^2q^2m$ for some distinct primes $p,q$  and  some $m\in \mathbb{N}$.	
	Let $z\in \langle y\rangle$ be an element of order $pq$. Since $o(z)$ is not a prime power, by Proposition \ref{ord-pl-cr}, we deduce that $z$ is  plain.  Thus, by the claim above, we get $p^2\mid  o(z)$, a contradiction.
\end{proof}
The above theorem has interesting consequences for the structure of critical groups. In particular, it has consequences for the Hughes-Thompson subgroup of the Sylow subgroups.
\begin{corollary}
	\label{TagliaGruppiCritici}
	Let $G$ be a critical group and $p$ a prime dividing $|G|.$  Then every element of order $p$ is the power of an element of order $p^2.$ In particular, $p^2\mid |G|$.  
\end{corollary}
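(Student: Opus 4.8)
The plan is to reduce the whole statement to the structure of the $\mathtt{N}$-class of a single element of order $p$, exploiting the classification of compound critical classes already available. First I would fix a prime $p\mid|G|$ and, by Cauchy's theorem, pick an arbitrary element $z\in G$ with $o(z)=p$; the target is to exhibit $w\in G$ with $o(w)=p^2$ and $z\in\langle w\rangle$. Since $G$ is critical, $z$ is a critical element, and as $o(z)=p$ is a proper prime power, Proposition \ref{ord-pl-cr} (equivalently Theorem \ref{crit-impl-comp}) forces $z$ to be compound. Hence $C:=[z]_{\mathtt{N}}\neq\mathcal{S}$ is a compound critical class, so by Proposition \ref{par-crit-comp} its parameters are $(p',r,0)$ for some prime $p'$ and some integer $r\geq2$.

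Next I would unpack this via Proposition \ref{propC_y}: feeding $s=0$ into the description of compound classes yields a root $y$ with $o(y)=p'^{\,r}$ and $C=\{v\in\langle y\rangle : p'\leq o(v)\leq p'^{\,r}\}=\langle y\rangle\setminus\{1\}$. A short identification then pins down the prime: since $z\in C\subseteq\langle y\rangle$ and $\langle y\rangle$ is a $p'$-group, the order $p=o(z)$ must be a power of $p'$, whence $p'=p$ and $o(y)=p^r$ with $r\geq2$. Now the desired element is produced explicitly: set $w:=y^{p^{r-2}}$, so that $o(w)=p^2$, and note that in the cyclic group $\langle y\rangle$ there is a unique subgroup of order $p$, namely $\langle z\rangle$, which is contained in the order-$p^2$ subgroup $\langle w\rangle$. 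Therefore $z\in\langle w\rangle$, i.e. $z$ is a power of $w$. Since $w$ itself has order $p^2$, Lagrange gives $p^2\mid|G|$, and as $z$ was an arbitrary element of order $p$, both assertions follow.

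I do not expect a genuine obstacle here, since the real content is carried entirely by the earlier classification results; the only points demanding care are verifying that the parameter prime $p'$ coincides with $p$ (forced because $z$ has order $p$ and lies inside the $p'$-group $\langle y\rangle$) and that $s=0$ collapses $C$ to all of $\langle y\rangle\setminus\{1\}$, which is precisely what makes an element of order $p^2$ available within the very same $\mathtt{N}$-class as $z$.
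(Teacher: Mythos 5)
Your proposal is correct and follows essentially the same route as the paper: deduce that an element of order $p$ is compound (the paper cites Theorem \ref{crit-impl-comp}, you equivalently use Proposition \ref{ord-pl-cr}), take a root $y$ of its $\mathtt{N}$-class with $o(y)=p^r$, $r\geq 2$, and extract the power of $y$ of order $p^2$ containing the given element. Your extra step through Proposition \ref{par-crit-comp} to get $s=0$ is not actually needed (Proposition \ref{propC_y} already places the element inside $\langle y\rangle$ for any $s$), but it is harmless.
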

\begin{proof}
	Let $x\in G$ with $o(x)=p$. By Theorem \ref{crit-impl-comp}, $x$ is compound and hence there exists $y\in G$ root for $[x]_{\mathtt{N}}$. Then  $o(y)=p^r$, $r\geq 2$, and $x$ is a power of $y.$ Since $p^2\mid o(y)$, we also have that $x$ is the power of an element of order $p^2.$
	\end{proof}
%The above result has interesting consequences for the Hughes-Thompson subgroup of the Sylow subgroups of critical groups.	

\begin{prop}\label{Hughes} Let $G$ be a critical group, $p$ a prime dividing $|G|$ and $P\in \mathrm{Syl}_p(G)$.  Then $H_p(P)=P.$
\end{prop}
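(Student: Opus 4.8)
The plan is to strip the statement down to its essential content and then feed in the fact that a critical group has only elements of prime‑power order. Since $H_p(P)$ is by definition generated by the elements of $P$ of order different from $p$, every such element already lies in $H_p(P)$; hence it suffices to prove that each $x\in P$ with $o(x)=p$ lies in $H_p(P)$, and for this it is enough to produce an element $w\in P$ with $o(w)=p^2$ and $x\in\langle w\rangle$, since then $w\in H_p(P)$ and $x\in\langle w\rangle\le H_p(P)$. First I would record that $G$ is an EPPO group: by Theorem \ref{crit-impl-comp} the critical group $G$ is compound, so every nonidentity element is compound critical and, by Proposition \ref{ord-pl-cr}, has order a proper prime power. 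From this I would deduce that $C_G(x)$ is a $p$-group for every $x$ of order $p$: if some $g\in C_G(x)$ had order divisible by a prime $q\ne p$, then $x$ and a suitable power of $g$ would commute and yield an element of order divisible by both $p$ and $q$, contradicting EPPO. Finally, Corollary \ref{TagliaGruppiCritici} gives an element $y$ of order $p^2$ with $x\in\langle y\rangle$; as $x\in\langle y\rangle$, the element $y$ centralizes $x$, so $y\in C_G(x)$. Thus the wanted order‑$p^2$ element over $x$ already sits inside the $p$-group $C_G(x)$.

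The heart of the argument is to place such an element inside the fixed Sylow subgroup $P$, i.e. to show $C_G(x)\le P$ (equivalently, that $x$ is not cyclic maximal in $P$). When $x\in Z(P)$ this is immediate: then $P\le C_G(x)$, and since $C_G(x)$ is a $p$-group while $P$ is a maximal $p$-subgroup, $C_G(x)=P$, whence $y\in P$ and we are done. For the general case I would invoke the global partition produced by criticality: by Theorem \ref{crit-impl-comp}, Proposition \ref{par-crit-comp} and Proposition \ref{propC_y}, every nontrivial $\mathtt{N}$-class has the form $\langle y\rangle\setminus\{1\}$ with parameters $(p,r,0)$ and neighbourhood closure $\widehat{[x]_{\mathtt{N}}}=\langle y\rangle$; since $\widehat{[x]_{\mathtt{N}}}=[x]_{\mathtt{N}}\cupdot\{1\}$ and distinct $\mathtt{N}$-classes are disjoint, distinct closures meet only in $1$. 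Hence the maximal cyclic subgroups of $G$ partition $G$, each has prime‑power order at least $p^2$, and intersecting with $P$ shows that the maximal cyclic subgroups of $P$ partition $P$ and pairwise meet trivially. In these terms $H_p(P)=P$ is equivalent to the assertion that none of these components has order exactly $p$.

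The step I expect to be the main obstacle is precisely the exclusion of a maximal cyclic subgroup of $P$ of order $p$, that is, of a ``bad'' $x$ that is cyclic maximal in $P$ while lying, in $G$, under an order‑$p^2$ element outside $P$. I would attack this inside the $p$-group $C_G(x)$, where $x$ is central, $x$ is a $p$-th power, and the maximal cyclic subgroups still pairwise intersect trivially: a short structural analysis forces a normal cyclic subgroup $\langle c\rangle$ of maximal order together with an order‑$p$ element $b$ outside it, and for $p$ odd a direct computation gives the contradiction $(cb)^p=c^p\ne 1$. The genuinely delicate point is $p=2$, where $D_8$ satisfies all the partition constraints yet has $H_2(D_8)\ne D_8$; ruling out a dihedral (or generalized quaternion) Sylow cannot rest on the partition alone and must use the Frobenius, fixed‑point‑free flavour of criticality. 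This $p=2$ localization is where I would concentrate the real work, the rest being the routine reductions outlined above.
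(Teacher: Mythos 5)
Your proposal does not reach a proof. The paper's own argument is three lines: pick $x\in P\setminus H_p(P)$, note $o(x)=p$, invoke Corollary \ref{TagliaGruppiCritici} to obtain $y$ of order $p^2$ with $x\in\langle y\rangle$, and conclude $y\in H_p(P)$, hence $x\in\langle y\rangle\leq H_p(P)$, a contradiction. You begin the same way, but then you observe --- correctly --- that the corollary only places $y$ in $G$, whereas $y\in H_p(P)$ requires $y\in P$, and you make the passage from $G$ down to the fixed Sylow subgroup the centre of your argument. The concern is legitimate: in a general group an element of order $p$ in $P$ can be a $p$-th power in $G$ without being one reachable inside $P$ (e.g. $(12)(34)=(1324)^2$ in $S_4$ with $P=\langle(1234),(13)\rangle$, yet $(12)(34)\notin H_2(P)=\langle(1234)\rangle$), so the paper's phrase ``it follows that $y\in H_p(P)$'' does presuppose something. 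But having isolated the issue, you do not resolve it. Your central case $x\in Z(P)$ is fine ($C_G(x)$ is a $p$-group by EPPO, contains $P$, hence equals $P$ and contains $y$). For non-central $x$, the ``short structural analysis'' that is supposed to force a normal cyclic subgroup of maximal order together with an order-$p$ element outside it is only asserted, not carried out; and you explicitly declare the $p=2$ case (dihedral or generalized quaternion configurations) unresolved, saying that is where you ``would concentrate the real work.'' A proof cannot stop where the real work begins.

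Concretely, the missing step is exactly the assertion that no Sylow $p$-subgroup of a critical group has a cyclic maximal subgroup of order $p$ --- equivalently, that the component $\langle y_i\rangle$ of the global partition through an order-$p$ element of $P$ already lies inside $P$, or that $C_G(x)\leq P$ for every $x\in P$ of order $p$. Until that is established for every prime, including $p=2$, the contradiction ``$x\in H_p(P)$'' is not available and the proposition is unproved. As written, your proposal is an accurate diagnosis of the delicate point in this statement, followed by a promissory note rather than an argument; to be acceptable it must either supply that step or find a route that avoids it.
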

\begin{proof} Assume, by contradiction, that $H_p(P)\neq P.$ Pick $x\in P\setminus H_p(P).$ Then $o(x)=p$. Since $G$ is critical, by Corollary \ref{TagliaGruppiCritici}, $x$ is the power of a certain $y\in G$ with $o(y)=p^2.$ It follows that $y\in H_p(P)$ and hence also $x\in H_p(P)$, a contradiction. 
\end{proof}
We are finally ready for a complete description of critical groups. We are going to see that critical groups are a subclass of the so-called EPPO groups, that is, those groups
whose  elements have prime power order.  The class of EPPO groups has a well consolidated history, starting with  Highman \cite{hig} who described the solvable case. Later Suzuki \cite{suz} described the simple case and the classification was finally completed by Brandl \cite{brandl}.

%\subsection{The characterization of critical groups}
\begin{theorem}\label{Cr-iff-Frob} A group $G$ is 
critical if and only if  there exist distinct primes $p,q$ and integers $a,b\geq 2$  such that $G$ is a Frobenius group with kernel $F\cong C_{p^a}$ and complement $H\cong C_{q^b}$.
%Then every element of $G$ has prime power order; for every prime $p\mid |G|$, we have $p^2\mid \mathrm{Exp}(P)$, where $P\in \mathrm{Syl}_p(G);$ $Z(G)=1$; $G$ is not nilpotent; $G$ admits a partition into maximal-cyclic subgroups of prime power order; the Sylow subgroups are cyclic; $G$ is metacyclic; $G$ is supersolvable; $G$ is a Frobenius group.
\end{theorem}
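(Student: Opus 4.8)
The plan is to prove both implications, treating the (comparatively easy) sufficiency first. For the \emph{if} direction, suppose $G$ is a Frobenius group with kernel $F\cong C_{p^a}$ and complement $H\cong C_{q^b}$, with $a,b\geq 2$. A Frobenius group is partitioned by its kernel together with the conjugates of a complement: the set $\{F\}\cup\{H^g:g\in G\}$ consists of subgroups pairwise intersecting in $\{1\}$ and covering $G$. Here every component is cyclic of prime power order ($p^a$ or $q^b$) with exponent at least $2$, so this is exactly a non-trivial partition of the type required by Proposition \ref{applicazione_critical}. That proposition immediately yields that $G$ is compound critical, and in particular critical.

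For the \emph{only if} direction, assume $G$ is critical. First I would record two consequences of criticality. By Theorem \ref{crit-impl-comp}, $G$ is compound, so by Proposition \ref{ord-pl-cr} every $x\in G\setminus\{1\}$ has order a proper prime power; thus $G$ is an EPPO group. Next, for each non-trivial $\mathtt N$-class $C=[x]_{\mathtt N}$, Proposition \ref{par-crit-comp} gives parameters $(p,r,0)$, and Proposition \ref{propC_y} then identifies $C=\langle y\rangle\setminus\{1\}$ for a root $y$ with $o(y)=p^r$; hence $\widehat{[x]_{\mathtt N}}=C\cupdot\{1\}=\langle y\rangle$ is a cyclic subgroup of order $p^r\geq p^2$. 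Letting $x$ range over $G\setminus\{1\}$, these closures pairwise intersect in $\{1\}$ and cover $G$, so $\mathcal P:=\{\widehat{[x]_{\mathtt N}}:x\neq 1\}$ is a non-trivial partition of $G$ into cyclic subgroups of prime power order with exponent at least $2$.

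The crucial step is to deduce that every Sylow subgroup of $G$ is cyclic. Fix a prime $p\mid|G|$ and $P\in\mathrm{Syl}_p(G)$. The partition $\mathcal P$ restricts to $P$: the sets $U\cap P$ with $U\in\mathcal P$ and $U\cap P\neq 1$ are cyclic subgroups of $P$ covering $P\setminus\{1\}$ and pairwise meeting in $\{1\}$, hence form a partition of $P$. On the other hand, Proposition \ref{Hughes} gives $H_p(P)=P$, so by Kegel's Theorem \ref{Kegel} the group $P$ admits \emph{no} non-trivial partition; therefore the induced partition is trivial, its unique (cyclic) component equals $P$, and so $P$ is cyclic, necessarily of order at least $p^2$. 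Since all Sylow subgroups of $G$ are cyclic, a classical theorem on $Z$-groups writes $G\cong C_m\rtimes C_n$ with $\gcd(m,n)=1$ and $C_m$ normal. Because $G$ is EPPO, neither cyclic factor can have order divisible by two distinct primes, so $m=p^a$ and $n=q^b$ are prime powers with $p\neq q$; as $G$ is non-cyclic (by Proposition \ref{crit_S_1} its star set is $\{1\}$) both factors are proper, and the Sylow bound forces $a,b\geq 2$. Finally, if some $1\neq k\in C_n$ fixed a non-trivial $u\in C_m$, then $u$ and $k$ would commute and $uk$ would have order $p^iq^j$ for some $i,j\geq 1$, not a prime power, contradicting the EPPO property; hence $C_n$ acts fixed-point-freely on the normal subgroup $C_m$, which is precisely the statement that $G$ is a Frobenius group with kernel $C_m\cong C_{p^a}$ and complement $C_n\cong C_{q^b}$.

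I expect the main obstacle to be the passage from the local structure to the global one. The clean point is the Sylow-cyclicity argument, which packages Proposition \ref{Hughes} together with Kegel's theorem so as to avoid any appeal to the classification of groups with a partition (or of EPPO groups): once all Sylow subgroups are cyclic, the $Z$-group description plus the EPPO condition pin down the metacyclic shape, and the fixed-point-free action is forced by prime-power orders. The steps requiring the most care are verifying that the restricted family $\{U\cap P\}$ is genuinely a partition of $P$ (so that Kegel applies) and that the EPPO hypothesis really excludes composite $m$ or $n$; both are short but essential.
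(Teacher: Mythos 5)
Your proof is correct, and for most of its length it follows the paper's own argument step for step: the converse via Proposition \ref{applicazione_critical} applied to the standard Frobenius partition; and, for the forward direction, the EPPO property from Theorem \ref{crit-impl-comp} and Proposition \ref{ord-pl-cr}, the partition of $G$ by the neighbourhood closures $\widehat{[x]_{\mathtt N}}=\langle y\rangle$, the cyclicity of every Sylow subgroup obtained by restricting that partition and combining Proposition \ref{Hughes} with Kegel's Theorem \ref{Kegel}, and finally the H\"older--Burnside--Zassenhaus description of groups with all Sylow subgroups cyclic. The one place where you genuinely diverge is the last step, where the Frobenius property is established. The paper stays inside the partition: it counts components, $p^aq^b=p^a+n_q(q^b-1)$, deduces $n_q=p^a=|G:N_G(Q)|$, hence $Q=N_G(Q)$, and then uses $Q^g\cap Q=1$ for $g\notin Q$ to conclude that $G$ is Frobenius with complement $Q$. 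You instead argue that if a non-trivial element of $C_n$ centralized a non-trivial element of $C_m$, their product would have order $p^iq^j$ with $i,j\geq 1$, contradicting EPPO; hence $C_n$ acts fixed-point-freely on the normal subgroup $C_m$ and $G=C_m\rtimes C_n$ is Frobenius by the standard criterion for split extensions. Your route is shorter and avoids the counting, at the small cost of invoking the equivalence between ``$F\rtimes H$ is Frobenius with kernel $F$ and complement $H$'' and ``$C_F(h)=1$ for all $h\in H\setminus\{1\}$'', a standard fact the paper does not need. Both arguments are sound, and your implicit exclusion of the $p$-group case (a non-cyclic group cannot have a single cyclic Sylow subgroup equal to itself) is legitimate, whereas the paper rules it out explicitly before invoking the $Z$-group theorem.
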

\begin{proof}  Let $G\neq 1$ be a critical group. Then $\mathcal{S}=\{1\}$ and $G$ cannot be cyclic.
Moreover, by Theorem \ref{crit-impl-comp}, $G$  is compound critical.
By Proposition \ref{ord-pl-cr}, every element of $G$ has prime power order  and thus $G$ is an EPPO group.
By Proposition \ref{par-crit-comp}, the $\mathtt{N}$-classes $C$ in $G$ and their neighbourhood closures $\hat{C}$ have the following form: $C=\langle y\rangle\setminus\{1\}$ and $\hat{C}=C \cupdot \{1\}=\langle y\rangle$ for some $y\in G$ with $o(y)=p^r$, where $p$ is a prime and $r\geq 2$ is an integer. %In particular, their neighbourhood closure  is a cyclic subgroup of $G$ of prime power order $p^r$, $r\geq 2.$

%Call now, for shortness, the $\mathtt{N}$-classes in $G$ different from $\mathcal{S}=\{1\}$, the non-trivial $\mathtt{N}$-classes.
Let $k$ be the number of non-trivial $\mathtt{N}$-classes. Observe that $k\geq 1$ because $G\neq 1.$
Let $y_1,\dots, y_k\in G$ be  roots for those classes. Such roots are, in particular, a system of representatives for the non-trivial $\mathtt{N}$-classes.
Thus, the distinct non-trivial $\mathtt{N}$-classes and their neighbourhood closures
are respectively given by $C_i:=[y_i]_{\mathtt{N}}=\langle y_i\rangle\setminus\{1\}$ and $\hat{C_i}=\langle y_i\rangle,$ for $i\in [k]$.

 Let $p_i^{\alpha_i}:=o(y_i)$ with $p_i$ prime and $\alpha_i\geq 2$, $i\in [k]$. 
Since $G=\left(\cupdot_{i=1}^k C_i\right) \cupdot\{1\},$ we have that $\{p_i:i\in[k]\}=\pi(G)$ and the set $\mathcal{P}:=\{\langle y_i\rangle : i\in [k]\}$ is a partition of $G$ by $k$ maximal cyclic subgroups of prime power order. 
Using that fact, we can rule out the possibility that $G$ is a $p$-group. Indeed, assume that $G$ is a $p$-group and observe that, by Proposition \ref{Hughes}, we have $H_p(G)=G.$ Thus, by Theorem \ref{Kegel}, we deduce that $\mathcal{P}$ is trivial, that is  $\mathcal{P}$ consists of a unique component equal to $G$. Hence $G$ is cyclic, a contradiction. 

We claim that every Sylow subgroup of $G$ is cyclic and appears in $\mathcal{P}$.
 Indeed, let $p\in \pi(G)$ and $P\in \mathrm{Syl}_p(G)$. Then, we have $$P=\bigcup_{i=1}^k(P\cap \langle y_i\rangle),$$ and at least one of the subgroups of $P$ given by $P\cap \langle y_i\rangle$ must be non-trivial. Reordering if necessary, we assume that $P\cap \langle y_i\rangle\neq 1$ if and only if $i\in [\ell]$ for a suitable  $1\leq \ell\leq k$. 
Since for $i\neq j\in [\ell]$, we have $(P\cap \langle y_i\rangle)\cap (P\cap \langle y_j\rangle)\leq \langle y_i\rangle \cap \langle y_j\rangle=1,$ we deduce that 
$$P=\bigcup_{i=1}^{\ell}(P\cap \langle y_i\rangle)$$ realizes a partition of $P$ in $\ell$ components. But, by Proposition \ref{Hughes}, we have $H_p(P)=P$. Thus, by Theorem \ref{Kegel}, the only partition of $P$ is the trivial partition. This means that $\ell=1$ and $P\cap \langle y_1\rangle=P$.  Thus $\langle y_1\rangle\geq P$. As a consequence $P$ is cyclic.
Moreover, we necessarily have $P=\langle y_1\rangle.$ Indeed $o(y_1)$ is a prime power and hence, it is a $p$-power. Thus $\langle y_1\rangle$ cannot properly contain the $p$-Sylow subgroup $P.$

We claim now that the components of $\mathcal{P}$ having order a power of a same $p \in\pi(G)$ are exactly the collection of the $p$-Sylow subgroups of $G.$
Let $\langle y_i\rangle$  be such that $o(y_i)=p_i^{\alpha_i}$, with $p_i=p$. Then $y_i\in P$ for some $P\in  \mathrm{Syl}_p(G)$. Since we have observed that every Sylow of $G$ appears in $\mathcal{P}$, we deduce that there exists $j\in [k]$ such that $P=\langle y_j\rangle\geq \langle y_i\rangle$ which, by the definition of partition,
implies $\langle y_j\rangle=\langle y_i\rangle$ and thus
 $i=j$. It follows that $\langle y_i\rangle=P.$ 

As a consequence, for a fixed $p \in\pi(G)$, the roots $ y_i$ having order a power of a certain prime $p$ have all the same order $p^{n(p)}$, $n(p)\geq 2$, and
 the partition $\mathcal{P}$ is the collection of the distinct Sylow subgroups of $G.$

Now,  a classic theorem by H\"older, Burnside, Zassenhaus  (\cite[10.1.10]{Rob}) characterizes the  finite groups with cyclic Sylow subgroups.  From that theorem and recalling that $G$ is a non-cyclic EPPO group with $P\cong C_{p^{n(p)}}$, $n(p)\geq 2$, for all $p\in \pi(G)$, we deduce that $G$ is a meta-cyclic non-cyclic  group with presentation
\begin{equation}\label{forma}
G=\langle x, y: x^{p^a}=1,y^{q^b}=1, x^y=x^r \rangle
\end{equation}
where  $p$ and $q$ are distinct primes, $a,b$ are integers with  $a, b\geq 2$, $2\leq r<p^a$, $p\nmid r-1$  and  $r^{q^b}\equiv 1(\mathrm{mod}\, p^a)$. Note that this last condition implies $p\nmid r.$ In particular, we have $ \pi(G)=\{p,q\}$ and $|G|=p^aq^b$. Then $\mathcal{P}=\{P, Q_1,\dots, Q_{n_q}\}$ where $P=\langle x\rangle$ is the unique $p$-Sylow of $G$ and $Q_1,\dots, Q_{n_q}$ are the distinct $q$-Sylow subgroups of $G$. Then we have 
$$p^aq^b=p^a+n_q(q^b-1)$$
which implies $p^a=n_q=|G:N_G(Q)|$ for every $q$-Sylow subgroup $Q$ of $G$. As a consequence, we have  $Q=N_G(Q)$. Let now $g\in G\setminus Q$. Then $Q^g\neq Q$ and thus, since $\mathcal{P}$ is a partition,  $Q^g\cap Q=1$. Hence $G$ is a Frobenius group with complement $Q\cong C_{q^b}$ and kernel $P\cong C_{p^a}$.
\medskip

Conversely, let $G$ be a Frobenius groups with Frobenius kernel $F\cong C_{p^a}$ and Frobenius complement $H\cong  C_{q^b},$ where $p$ and $q$ are distinct primes and $a,b\geq 2$ are integers. We show that $G$ is compound critical. As well known (\cite[8.5.5]{Rob}),
$G$ admits the partition $$\mathcal{P}:=\{F, H^x: x\in F\}$$
into $|\mathcal{P}|=p^a+1$ components given by the kernel and the distinct complements. By our assumption, all the components in $\mathcal{P}$ are cyclic of prime power order given by $p^a$ or $q^b$. Then, by Proposition \ref{applicazione_critical}, $G$ is critical.
 \end{proof}   
%\newpage

%We claim that every Sylow of $G$ is self-centralizing.  Let $p\in \pi(G)$ and $P\in \mathrm{Syl}_p(G)$. Since $P=\langle x \rangle\cong C_{p^n}$, we surely have $C_G(P)\geq P.$ Assume that $C_G(P)>P.$ Then there exists a prime $q\neq p$ and $y\in G$ such that $xy=yx$. Thus $o(xy)=pq$, a contradiction.

%As a consequence, we have that $|N_G(P):P|=|N_G(P):C_G(P)|$ is coprime with $p$ and divides $|\mathrm{Aut}(C_{p^n})|=p^{n-1}(p-1),$ so that $|N_G(P):P|$ divides $p-1.$ In particular, if $p=2\in \pi(G)$ we deduce that  $N_G(P)=P.$

%We show that if $P\in\mathrm{Syl}_p(G)$ is not normal in $G$, then $O_p(G)=1.$ Indeed if $P$ is not normal there exists a conjugate $P^g$ of $P$ different from $P$. Hence $P$ and $P^g$ are two distinct components of the partition $\mathcal{P}$ and thus they intersect trivially. Hence also $O_p(G)=1.$
 
%Note that from the proof of Theorem \ref{Cr-iff-Frob} we know some necessary conditions. For instance, we know that $p$ must be odd. 

%\textcolor{blue}{qui c'era un pezzo scritto seguito dalla Proposition \ref{Frobenius-espliciti}}

Note that there exists no critical group with kernel given by a $p$-group with $p\in \{2,3\}$. Indeed, by Proposition \ref{Frobenius-espliciti}\, $(ii)$, we know that the existence of a Frobenius group with kernel $F\cong C_{p^a}$ and complement $H\cong C_{q^b}$, where $a,b\geq 2$, requires $q^b\mid p-1.$
Now, clearly, there exist no prime $q$ and no $b\geq 2$ such that $q^b\mid 2-1=1$ or such that $q^b\mid 3-1=2.$ 
In particular we have that $p$ must be odd.
Additionally, when $p=5$ the only choice for $q^b$ is given by $q=2=b$. Choosing $a=2$ and $r=7$, we 
construct the critical group $C_{25}\rtimes C_{4}$ of minimum size $100,$ defined by the condition $x^y=x^7$.  Note that this is the group of minimum size in the family described in Example \ref{Esempio_gruppo_critico}.
Moreover, observe that all the critical groups are solvable but non-nilpotent.
\smallskip
Collecting Theorem \ref{Cr-iff-Frob} and Proposition \ref{Frobenius-espliciti}, we obtain our Main Theorem.
\begin{main}
A group is critical if and only if $G=\langle x\rangle\rtimes \langle y\rangle$, where $\langle x\rangle=C_{p^a}$, $\langle y\rangle=C_{q^b}$, with $p\neq q$ primes, $a,b\geq 2$, $x^y=x^r,$ with $2\leq r<p^a$ such that $p\nmid r$ and $q^b=|r|_{\mod p}$.
\end{main}

\section{Critical groups, partitions and EPPO groups}

In this section we show some applications of the results obtained in the previous sections.
First of all, Theorem \ref{Cr-iff-Frob} allows us to enhance the content of Proposition
\ref{applicazione_critical} in the case of cyclic components of order a prime power with exponent at least $2$,
 shedding new light on partitions.
\begin{maincor} Let $G$ be a group 
	admitting a non-trivial partition into cyclic subgroups of orders  a prime power with exponent at least $2$. Then $\pi(G)=\{p,q\}$ with $p,q$ distinct primes and $G$ is a Frobenius group with kernel $F\cong C_{p^a}$ and complement $H\cong C_{q^b}$, for suitable $a,b\geq2$ and $p$ odd.
\end{maincor} 
The Main Theorem makes clear that the critical groups are particular non-cyclic metacyclic EPPO groups with $|\pi(G)|\geq 2$. Those groups are characterized in \cite{Shi}.\footnote{ \cite[Theorem 1.3]{Shi} is stated in a different way and some tedious typos occur throughout the paper. However, it is clear that what it is proven there is the content of Theorem \ref{shi}.} 

\begin{theorem}{\rm \cite[Theorem 1.3]{Shi}}\label{shi} Let $G$ be a group with $|\pi(G)|\geq 2$.
The following facts are equivalent.
\begin{itemize}
\item[$(i)$] $G$ is a non-cyclic metacyclic  {\rm EPPO}-group;
\item[$(ii)$]$G=\langle x,y\  |\  x^{p^a}=1,\  y^{q^b}=1,\  x^y =x^r\rangle,$
for some $p$ and $q$ distinct primes, $a,b,r\in \mathbb{N}$ with $p\nmid r$ and $q^b=|r|_{\mod p^a}.$ 
\end{itemize}
\end{theorem}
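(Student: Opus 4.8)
The plan is to prove the two implications separately, relying throughout on two elementary facts about {\rm EPPO} groups: a cyclic {\rm EPPO} group necessarily has prime power order (a generator of an order with two distinct prime divisors would not have prime power order), and every quotient of an {\rm EPPO} group is again {\rm EPPO} (since $o(gN)$ divides the prime power $o(g)$).

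For $(i)\Rightarrow(ii)$, I would fix, using metacyclicity, a cyclic normal subgroup $N=\langle x\rangle$ with $G/N$ cyclic. By the two facts above, $N\cong C_{p^a}$ and $G/N\cong C_{q^b}$ for primes $p,q$ and integers $a,b\geq1$, whence $|G|=p^aq^b$; the hypothesis $|\pi(G)|\geq2$ then forces $p\neq q$. Thus $N$ is the (normal) Sylow $p$-subgroup and, by Schur--Zassenhaus, $G=\langle x\rangle\rtimes\langle y\rangle$ with $\langle y\rangle\cong G/N\cong C_{q^b}$. Writing $x^y=x^r$, the fact that conjugation by $y$ is an automorphism of $C_{p^a}$ gives $p\nmid r$, and it remains to show that $d:=|r|_{\mod p^a}$ equals $q^b$. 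Since $y^{q^b}=1$ acts trivially, $d\mid q^b$, so $d=q^c$ with $c\leq b$. If $c<b$, then $y^{q^c}$ centralises $x$, and since $o(y^{q^c})=q^{b-c}$ is coprime to $o(x)=p^a$, the commuting product $xy^{q^c}$ has order $p^aq^{b-c}$ with $b-c\geq1$, contradicting the {\rm EPPO} property. Hence $c=b$ and $q^b=|r|_{\mod p^a}$.

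For $(ii)\Rightarrow(i)$, I would first observe that the presentation describes the semidirect product $C_{p^a}\rtimes C_{q^b}$ of order $p^aq^b$ (the action being well defined because $|r|_{\mod p^a}\mid q^b$), so that $G$ is metacyclic; since $|r|_{\mod p^a}=q^b\geq2$ we have $r\not\equiv1\pmod{p^a}$, so $G$ is non-abelian and hence non-cyclic. To obtain the {\rm EPPO} property I would invoke Proposition \ref{Frobenius-espliciti}: replacing $r$ by its residue modulo $p^a$ (still $\not\equiv1$, so $2\leq r<p^a$), I would note that $q^b=|r|_{\mod p^a}$ is equivalent to the Frobenius condition $q^b=|r|_{\mod p}$ of that proposition, because $|r|_{\mod p}$ divides $|r|_{\mod p^a}$ with quotient a power of $p$ (via the surjection $U_{p^a}\to U_p$ with $p$-group kernel), while $q^b$ is coprime to $p$. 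Proposition \ref{Frobenius-espliciti} then makes $G$ a Frobenius group with kernel $\langle x\rangle$ and complement $\langle y\rangle$; as every non-identity element of a Frobenius group lies in the kernel or in a conjugate of a complement, its order is a power of $p$ or of $q$, and $G$ is {\rm EPPO}.

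The step I expect to be the crux is the precise role of the order condition $q^b=|r|_{\mod p^a}$, which simultaneously encodes the {\rm EPPO} property and fixed-point-freeness. In the direction $(i)\Rightarrow(ii)$ the delicate point is using {\rm EPPO} to exclude the action having order a proper divisor of $q^b$, via the mixed-order element $xy^{q^c}$; in the direction $(ii)\Rightarrow(i)$ the key is recognising that the ``$\bmod\,p^a$'' and ``$\bmod\,p$'' forms of the order condition coincide because $q^b$ is prime to $p$, which is exactly what permits importing Proposition \ref{Frobenius-espliciti}.
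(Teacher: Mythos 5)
The paper does not prove this theorem: it is imported verbatim from \cite[Theorem 1.3]{Shi} (with a footnote apologising for typos in that source), so there is no in-paper argument to compare yours against. Judged on its own, your proof is correct and self-contained, and it is worth having. In $(i)\Rightarrow(ii)$ the chain is sound: non-cyclicity forces both $N$ and $G/N$ to be non-trivial, the EPPO property forces each to have prime power order, $|\pi(G)|\geq 2$ forces $p\neq q$, and then $N$ is a normal Sylow $p$-subgroup with cyclic complement (Schur--Zassenhaus, or simply a Sylow $q$-subgroup). The crux, ruling out $|r|_{\mod p^a}=q^c$ with $c<b$ via the mixed-order element $xy^{q^c}$ of order $p^aq^{b-c}$, is exactly the right use of the EPPO hypothesis. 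In $(ii)\Rightarrow(i)$ your reduction of $q^b=|r|_{\mod p^a}$ to the condition $q^b=|r|_{\mod p}$ of Proposition \ref{Frobenius-espliciti} is valid: the kernel of $U_{p^a}\to U_p$ is the Sylow $p$-subgroup of $U_{p^a}$, so an element of order $q^b$ coprime to $p$ maps injectively and its image still has order $q^b$. This lets you conclude that every group in $(ii)$ is in fact Frobenius, hence EPPO -- which is slightly stronger than what the theorem asserts and is essentially the content the paper later extracts in Proposition \ref{shi-Frob}. The only points I would tighten are bookkeeping: state explicitly that the presented group has order exactly $p^aq^b$ (it surjects onto the semidirect product you construct and has at most $p^aq^b$ elements), and phrase the ``equivalence'' of the two order conditions as the one implication you actually need, since $q^b=|r|_{\mod p}$ alone does not imply $q^b=|r|_{\mod p^a}$ in general.
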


We call the parameters $p, q, a, b, r$ describing the group $G$ in the above theorem the parameters of the EPPO non-cyclic metacyclic group $G$.

By Proposition \ref{Frobenius-espliciti} and Theorem \ref{shi}, we now deduce a characterization of the EPPO non-cyclic metacyclic Frobenius groups.
\begin{prop}\label{shi-Frob} Let $G$ be a group with $|\pi(G)|\geq 2$. The following facts are equivalent.
\begin{itemize}
\item[$(i)$] $G$ is a non-cyclic metacyclic Frobenius  {\rm EPPO} group;
\item[$(ii)$]$G=\langle x,y\  |\  x^{p^a}=1,\  y^{q^b}=1,\  x^y =x^r\rangle,$
where $p$ and $q$ are different primes, $a,b,r\in \mathbb{N}$ and $q^b=|r|_{\mod p}.$
\end{itemize}
\end{prop}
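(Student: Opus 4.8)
The plan is to obtain this equivalence by combining the two results quoted just before it, namely Theorem \ref{shi} (Shi) and Proposition \ref{Frobenius-espliciti}. The only real discrepancy is that Theorem \ref{shi} controls the order of $r$ \emph{modulo} $p^a$ (the condition $q^b=|r|_{\mod p^a}$), whereas the Frobenius criterion of Proposition \ref{Frobenius-espliciti}$(i)$ is phrased \emph{modulo} $p$ (the condition $q^b=|r|_{\mod p}$). The bridge between the two is the divisibility $|r|_{\mod p}\mid |r|_{\mod p^a}$, which comes from the surjective reduction homomorphism $U_{p^a}\to U_p$ recalled before Proposition \ref{Frobenius-espliciti}, together with the relation $|r|_{\mod p^a}\mid q^b$ forced by the defining relations of $G$.

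For $(ii)\Rightarrow(i)$ I would first observe that writing $q^b=|r|_{\mod p}$ presupposes $r+(p)\in U_p$, hence $p\nmid r$, and that it forces a non-trivial action, so $r\not\equiv 1\pmod{p^a}$, $G$ is non-cyclic and $\pi(G)=\{p,q\}$. Conjugating $x$ by $y^{q^b}=1$ in the presentation gives $x=x^{r^{q^b}}$, whence $p^a\mid r^{q^b}-1$ and therefore $|r|_{\mod p^a}\mid q^b$. Combining this with $q^b=|r|_{\mod p}\mid |r|_{\mod p^a}$ squeezes out $|r|_{\mod p^a}=q^b$. Since $|\pi(G)|=2$, Theorem \ref{shi} now yields that $G$ is a non-cyclic metacyclic EPPO group. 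Finally, after replacing $r$ by its representative with $2\le r<p^a$ (legitimate since $p\nmid r$ and $r\not\equiv 1\pmod{p^a}$, and harmless because $x^{p^a}=1$), Proposition \ref{Frobenius-espliciti}$(i)$ converts the hypothesis $q^b=|r|_{\mod p}$ into the assertion that $G=C_{p^a}\rtimes C_{q^b}$ is Frobenius, completing this implication.

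For $(i)\Rightarrow(ii)$ I would start from Theorem \ref{shi}, which endows $G$ with the stated presentation, with $p\nmid r$ and $q^b=|r|_{\mod p^a}$. The remaining task is to upgrade this to $q^b=|r|_{\mod p}$ using that $G$ is Frobenius, and for this I would invoke the forward implication of the equivalence in Proposition \ref{Frobenius-espliciti}$(i)$; this requires identifying the Frobenius kernel as $\langle x\rangle\cong C_{p^a}$. Here I would note that in a Frobenius group the kernel and a complement have coprime orders, so from $|G|=p^aq^b$ the kernel has order $p^a$ or $q^b$; a normal subgroup of order $q^b$ would be a normal Sylow subgroup, which together with the (already normal, since $x^y=x^r$) subgroup $\langle x\rangle$ would force $G$ nilpotent, hence a direct product of coprime cyclic groups, i.e.\ cyclic, contradicting non-cyclicity. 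Thus the kernel is the unique $p$-Sylow $\langle x\rangle\cong C_{p^a}$, and Proposition \ref{Frobenius-espliciti}$(i)$ then delivers $q^b=|r|_{\mod p}$, which is condition $(ii)$.

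The main obstacle I anticipate is precisely the bookkeeping across the two moduli: one must keep track of the fact that the order of $r$ is taken modulo $p^a$ in Theorem \ref{shi} but modulo $p$ in Proposition \ref{Frobenius-espliciti}, and that these agree exactly because both get trapped between $|r|_{\mod p}$ and $q^b$. The only other delicate point is the identification of the Frobenius kernel in the direction $(i)\Rightarrow(ii)$, which has to be carried out carefully so as to match the specific ``kernel $C_{p^a}$'' wording required to apply Proposition \ref{Frobenius-espliciti}$(i)$.
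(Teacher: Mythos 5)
Your proposal is correct and follows the same overall skeleton as the paper: both directions are obtained by splicing together Theorem \ref{shi} and Proposition \ref{Frobenius-espliciti}, with the identification of the Frobenius kernel as $\langle x\rangle$ being the pivotal step. Two of your sub-arguments differ from the paper's, though. For the kernel identification in $(i)\Rightarrow(ii)$, the paper invokes the fact that the Frobenius kernel is the Fitting subgroup and notes it cannot exceed $\langle x\rangle$ by coprimality of kernel and complement; you instead argue that $|K|\in\{p^a,q^b\}$ and rule out $|K|=q^b$ because a normal $q$-Sylow together with the normal $\langle x\rangle$ would make $G$ a direct product of coprime cyclic groups, hence cyclic. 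Your version is more elementary (it avoids citing the Fitting characterization) and equally valid. For $(ii)\Rightarrow(i)$, the paper gets the EPPO property directly from the Frobenius partition into the kernel and the conjugates of the complement, all cyclic of prime power order; you instead verify the hypothesis $q^b=|r|_{\mod p^a}$ of Theorem \ref{shi} by squeezing $q^b=|r|_{\mod p}\mid|r|_{\mod p^a}\mid q^b$, the second divisibility coming from $y^{q^b}=1$. This squeeze is a nice explicit reconciliation of the two moduli that the paper leaves implicit, at the mild cost of tacitly assuming the presentation does define a group of order $p^aq^b$ (so that $o(x)=p^a$, which is needed to deduce $p^a\mid r^{q^b}-1$); the paper's route via the partition sidesteps this. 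Neither difference affects correctness.
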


\begin{proof}
	Assume first that $(i)$ holds. 
\begin{comment}
		\textcolor{blue}{Since $G$ is metacyclic, then there exists $N$ a normal subgroup such that $N$ and $G/N$ are cyclic. Moreover, since $G$ is EPPO, then also $N$ and $G/N$ are EPPO, then, since they are cyclic, we have that $|N|=p^a$ and $|G/N|=q^b$ for some $p,q$ primes and some $a,b\in \mathbb{N}$. We also have, since $|\pi(G)|\geq2$, that $p\ne q$. Note that $N$ is a $p$-Sylow so every $p$-Sylow is cyclic. Let $y\in G$ be such that $G/N=\langle Ny \rangle$. Since $|G/N|=q^b$, we have that $o(Ny)=q^b$, in particular $y\notin N$. Now, for $\pi$ the canonical homomorphism and $1\leq k\leq q^b$, we have that $1=(Ny)^k=Ny^{k}=\pi(y^k)$ if and only if $k=q^b$. Now, since $Ker(\pi)=N$ and $y\notin N$ we have that $o(y)=q^b$. Hence $\langle y \rangle$ is a $q$-Sylow subgroup and hence all $q$-Sylow are cyclic.}
		\textcolor{blue}{ Therefore, since all Sylow subgroups are cyclic, by H\"older, Burnside, Zassenhaus  (\cite[10.1.10]{Rob}), we deduce that $G$ has presentation
			\begin{equation*}
				G=\langle x, y: x^{p^a}=1,y^{q^b}=1, x^y=x^r \rangle
			\end{equation*}
			where $p\nmid r-1$,  $1\leq r<p^a$, $r^{q^b}\equiv 1(\mathrm{mod}\, p^a)$. Since $G$ is also a Frobenius group, By Proposition \ref{Frobenius-espliciti}, we have that $q^b=|r|_{\mod p}$. Thus $(ii)$ holds.}
	\end{comment}
	 Then, by Theorem \ref{shi}, $G=\langle x,y\  |\  x^{p^a}=1,\  y^{q^b}=1,\  x^y =x^r\rangle,$ for some $p$ and $q$ distinct primes, $a,b,r\in \mathbb{N}$ with $p\nmid r$ and $q^b=|r|_{\mod p^a}.$ Thus $G$ is a semidirect product of cyclic groups as those examined in Proposition  \ref{Frobenius-espliciti}. 
	Since in a Frobenius group the kernel $K$ is given by the Fitting set and $\langle x\rangle$ is nilpotent and normal, we have that $K\supseteq \langle x\rangle$. Moreover, the Fitting subgroup cannot be larger because otherwise we would have $q\mid|K|$ and thus $\gcd(|K|, |G/K|)\neq 1$, against the coprimality of kernel and complement in a Frobenius group. Thus the kernel is given by $\langle x\rangle$ and $G$ is a Frobenius group of the type described by Proposition \ref{Frobenius-espliciti}\,$(i)$. As a consequence, we have $q^b=|r|_{\mod p}$.
	\medskip
	
	Conversely assume that $(ii)$ holds. Then, by Proposition \ref{Frobenius-espliciti}\, $(i)$, $G$ is a Frobenius group with kernel $\langle x\rangle\cong C_{p^a}.$ Clearly it is also metacyclic and non-cyclic. Since the complement is isomorphic to $C_{q^b},$ taking into account the partition of $G$ we have that $G$ is also an EPPO group.
\end{proof}
We conclude with a special characterization of the EPPO non-cyclic metacyclic Frobenius groups  having parameters $a,b \geq 2$.
\begin{prop}\label{crit-eppo} Let $G$ be a non-cyclic metacyclic  {\rm EPPO} group with $|\pi(G)|\geq 2$ and parameters $a, b\geq 2.$ Then the following facts are equivalent:
\begin{itemize}
\item[$(i)$] $G$ is a Frobenius group;
\item[$(ii)$] $G$ is critical.
\end{itemize}
\end{prop}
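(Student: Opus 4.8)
The plan is to deduce both implications directly from the characterization of critical groups in Theorem \ref{Cr-iff-Frob}, combined with the description of Frobenius metacyclic EPPO groups in Proposition \ref{shi-Frob}; essentially no new computation is required, since all the relevant arithmetic has already been carried out in the preceding results.

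I would dispose of $(ii)\Rightarrow(i)$ at once: if $G$ is critical, then Theorem \ref{Cr-iff-Frob} directly asserts that $G$ is a Frobenius group, so $(i)$ holds with nothing further to check. Note that this direction does not even exploit the standing hypotheses that $G$ be non-cyclic, metacyclic and EPPO with parameters $a,b\ge 2$, since those are automatic consequences of criticality.

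For $(i)\Rightarrow(ii)$ I would start from Theorem \ref{shi}, which gives $G$ the presentation $\langle x,y\mid x^{p^a}=1,\ y^{q^b}=1,\ x^y=x^r\rangle$ with $p\nmid r$ and $q^b=|r|_{\mod p^a}$, where by hypothesis $a,b\ge 2$. Assuming $G$ is Frobenius, I would invoke Proposition \ref{shi-Frob} to identify the Frobenius kernel as $\langle x\rangle\cong C_{p^a}$ and to force the complement to be isomorphic to $C_{q^b}$. Thus $G$ is a Frobenius group whose kernel and complement are cyclic of prime power orders $p^a$ and $q^b$ with both exponents at least $2$, and the ``if'' direction of Theorem \ref{Cr-iff-Frob} then yields that $G$ is critical, establishing $(ii)$.

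The only delicate point — more bookkeeping than genuine obstacle — is to confirm that the integers $a,b$ appearing as ``parameters'' of the EPPO presentation coincide with the exponents of the cyclic kernel and complement furnished by the Frobenius structure, so that the constraint $a,b\ge 2$ matches exactly the exponent hypothesis of Theorem \ref{Cr-iff-Frob}. Once this identification is made explicit, both directions follow formally from the previously established theorems, with no further analysis of the congruence of $r$ modulo $p$ versus modulo $p^a$.
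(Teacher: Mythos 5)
Your proposal is correct and follows essentially the same route as the paper: $(ii)\Rightarrow(i)$ is immediate from Theorem \ref{Cr-iff-Frob}, and $(i)\Rightarrow(ii)$ combines Proposition \ref{shi-Frob} (to identify the kernel $\langle x\rangle\cong C_{p^a}$ and complement $\cong C_{q^b}$) with the converse direction of Theorem \ref{Cr-iff-Frob}. The ``delicate point'' you flag about matching the parameters $a,b$ with the exponents of kernel and complement is exactly what the paper settles by referring back to the identification of the kernel made inside the proof of Proposition \ref{shi-Frob}.
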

\begin{proof} $(i)\Rightarrow (ii)$ Let $G$ be a Frobenius group. Since $G$ is a non-cyclic metacyclic  {\rm EPPO} group with $|\pi(G)|\geq 2$,  by Proposition \ref{shi-Frob}, we have that $G=\langle x,y\  |\  x^{p^a}=1,\  y^{q^b}=1,\  x^y =x^r\rangle,$
where $p$ and $q$ are different primes and $q^b=|r|_{\mod p}.$ Thus, as explained inside the proof of Proposition \ref{shi-Frob}, the kernel of $G$ is given by $\langle x\rangle\cong C_{p^a}$ and a complement of $G$ by $\langle y\rangle\cong C_{q^b}$. Since $a, b\geq 2$, by  Theorem \ref{Cr-iff-Frob}, we deduce that $G$ is critical.

\medskip
$(ii)\Rightarrow (i)$ Assume that $G$ is critical. Then by Theorem \ref{Cr-iff-Frob}, $G$ is a Frobenius group.
\end{proof}

\vspace{7mm}

\noindent {{\bf Acknowledgments}}   Daniela Bubboloni has been supported by GNSAGA of INdAM (Italy),  by the European Union - Next Generation EU, Missione 4 Componente 1, PRIN 2022-2022PSTWLB - Group Theory and Applications, CUP B53D23009410006, and by local funding from the Universit\`a degli Studi di Firenze. Both authors are indebted to the participants of the International Joint Meeting of the Unione Matematica Italiana (UMI) and the American Mathematical Society (AMS), Special Session Graphs Associated with Groups: Advances and Applications, held in Palermo in July 2024, for many useful comments on a preliminary version of the paper. 
 \vspace{7mm}

		\vspace{10mm}
\noindent {\Large{\bf Conflict of interest}}
\vspace{2mm}

\noindent Declarations of conflict of interest in the manuscript: none.

	\end{document}